\newtheorem*{maintheorem*}{Main Theorem}
\newtheorem{theorem}{Theorem}[section]
\newtheorem{prop}[theorem]{Proposition}
\newtheorem{lem}[theorem]{Lemma}
\newtheorem{cor}[theorem]{Corollary}
\theoremstyle{definition}
\newtheorem{defn}[theorem]{Definition}
\newtheorem{rem}[theorem]{Remark}
\newtheorem{ex}[theorem]{Example}
\numberwithin{equation}{section}
\newcommand{\cc}{\mathbb{C}}
\newcommand{\ff}{\mathbb{F}}
\newcommand{\nn}{\mathbb{N}}
\newcommand{\pp}{\mathbb{P}}
\newcommand{\qq}{\mathbb{Q}}
\newcommand{\rr}{\mathbb{R}}
\newcommand{\zz}{\mathbb{Z}}
\providecommand\ldb{\llbracket}
\providecommand\rdb{\rrbracket}
\newcommand{\gp}{\text{gp}}
\newcommand{\uu}{\mathcal{U}}
\keywords{Riemann zeta function, Krull monoid, Euler's product, prime, strong atom, Decay Theorem}
\subjclass[2020]{Primary: 11A41, 11L20, 11M26, 11N80; Secondary: 20M13, 13F05}
\begin{document}
	\mbox{}
	\title{Riemann zeta functions for Krull monoids}
	
	\author{Felix Gotti}
	\address{Department of Mathematics\\MIT\\Cambridge, MA 02139\\USA}
	\email{fgotti@mit.edu}
	
	\author{Ulrich Krause}
	\address{Department of Mathematics and Computer Science\\University of Bremen\\D-28359 Bremen\\Germany}
	\email{uli.krause@uni-bremen.de}

\date{\today}

\begin{abstract}
	The primary purpose of this paper is to generalize the classical Riemann zeta function to the setting of Krull monoids with torsion class groups. We provide a first study of the same generalization by extending Euler's classical product formula to the more general scenario of Krull monoids with torsion class groups. In doing so, the Decay Theorem is fundamental as it allows us to use strong atoms instead of primes to obtain a weaker version of the Fundamental Theorem of Arithmetic in the more general setting of Krull monoids with torsion class groups. Several related examples are exhibited throughout the paper, in particular, algebraic number fields for which the generalized Riemann zeta function specializes to the Dedekind zeta function.
\end{abstract}

\bigskip
\maketitle

\bigskip
\section{Introduction}
\label{sec:intro}

In the setting of natural numbers, primes are defined purely by the multiplicative operation. However, most proofs of the infinitude of prime numbers also invoke the additive operation. An exception is a famous proof by Euler which can be seen as essentially multiplicative in that it uses instead of addition just certain ``scaling" of numbers. This notion of scaling can be formalized as a function, which we call a \emph{scale}, on the set $\nn$ of positive integers or, more generally, on a multiplicative monoid. This naturally raises a related and more general question: which standard prime-related results can be established for a purely multiplicative monoid endowed with a scale? Especially, what can be said about the infinitude of primes? This paper addresses this and related questions by extending Euler's analysis (that is, the representation of the Riemann zeta function as Euler's product over primes) from the monoid of natural numbers to more general multiplicative monoids equipped with scales. The role of primes is played by the more comprising notion of strong atoms. To make sure there are enough of these atoms, the monoids we deal with here are assumed to be Krull monoids with torsion class groups. The Decay Theorem then assures that each nonunit element has some power that can be factored into strong atoms in a unique way, yielding an extension of the Fundamental Theorem of Arithmetic that we will be using as our main tool.
\smallskip

Thus, we will use the notion of strong atoms as well as the Decay Theorem to provide a generalization of the Riemann zeta function to the setting of Krull monoids with torsion class group, and then we extend Euler's product formula accordingly. The classical (real) Riemann zeta function $\zeta \colon \rr_{> 1} \to \rr$ is defined as $\zeta(s) = \sum_{n=1}^\infty \frac 1{n^s}$ for each $s \in \nn$, and Euler's product formula can be written as follows:
\begin{equation} \label{eq:Euler's product formula}
	\zeta(s) = \sum_{n=1}^\infty \frac 1{n^s} = \prod_{p \in \pp} \frac{1}{1 - \frac{1}{p^s}} \quad \text{for every} \quad s \in \nn,
\end{equation}
where $\pp$ denotes the set of prime numbers (inside the multiplicative monoid $\nn$). The representation in~\eqref{eq:Euler's product formula} is specially useful in analytic number theory. Unique factorization monoids are cases of particular importance, and they include the multiplicative monoid $\nn$. In order to extend the classical Riemann zeta function to a monoid $M$ more general than $\nn$ (namely, a Krull monoid $M$ with torsion class group), here we introduce the notion of a scale, which is a monoid homomorphism $\sigma \colon M \to \rr^\times$. For instance, for each $s \in \nn$, the monoid homomorphism $\sigma_s \colon \nn \to \rr^\times$ defined by $\sigma_s \colon n \mapsto n^s$ is a scale on the Krull monoid $\nn$ (in the definition of a scale~$\sigma$, one can actually replace $\rr$ by any other Archimedean field).
\smallskip

Abstract analytic number theory in the context of Krull monoids has been considered in the literature for about three decades. The development of such theory seems to be initiated by Geroldinger and Kaczorowski in~\cite{GK92}, and since then it has been further investigated in several papers (for instance, see the recent papers~\cite{jK17,mR22} and references therein). However, in most of these papers, including the ones just cited, the authors put the emphasis on Krull monoids with finite class groups. In the present paper, we consider the more general setting of Krull monoids with torsion class groups. The key observation, and one of our fundamental motivations, is that Krull monoids with torsion class groups have enough of certain atoms that are sufficiently strong to play the role of primes and so, after some reasonable restriction, we still have an alternative version of the Fundamental Theorem of Arithmetic in the more general class of Krull monoids with torsion class groups.
\smallskip

In a cancellative and commutative monoid, a strong atom\footnote{Strong atoms are also called absolute irreducibles in the literature.} is an atom whose powers can be factored into atoms in a unique way (the obvious way). In contrast to the multiplicative monoid $\nn$, many Krull monoids do not satisfy the unique factorization property, and so although Krull monoids are always atomic, they do not contain enough primes to guarantee factorizations into primes. However, in light of the Decay Theorem, we know that in a given Krull monoid with torsion class group, the set consisting of all strong atoms is large enough such that the submonoid generated by this set is a unique factorization monoid whose radical monoid is the whole given Krull monoid. In this paper, we harness this fact and, after replacing the role of primes by that of strong atoms, we obtain a suitable generalization of the classical Riemann zeta function. Given the importance of the class of Krull monoids in factorization theory and abstract divisibility theory, we hope that the generalized versions of both the classical Riemann zeta function and the corresponding Euler's product formula we propose here eventually become handy tools at the disposal of interested authors to further study the analytic and arithmetic theory of Krull monoids with torsion class groups.
\smallskip

In Section~\ref{sec:background}, we briefly mention some fundamental notation, definitions, and well-known results we are using throughout this paper. 
\smallskip

In Section~\ref{sec:ST and KM}, we discuss sets of strong atoms for general monoids, putting special emphasis on Krull monoids with torsion class groups. We begin by introducing the notion of strong atoms, and we provide various examples to compare this notion with those of an atom and a prime. Then we discuss the set of strong atoms in the setting of Krull monoids, and we present the Decay Theorem for Krull monoids with torsion class groups, which guarantees that every element has a power that can be uniquely factored into strong atoms. We use the Decay Theorem to gain a better understanding of the atoms and strong atoms of Krull monoids with class group $\zz/2\zz$.
\smallskip

In Section~\ref{sec:Zeta function for KM and Euler Product}, we introduce and study the main objects of this paper, the generalized Riemann zeta functions: for a Krull monoid $M$ with torsion class group, we define a function $\zeta_M$ on the set of all scales $\sigma \colon M \to \rr^\times$ in such a way that $\zeta_{M}(\sigma)$ is an infinite sum, which specializes to the classical Riemann zeta series $\zeta(s)$ when the Krull monoid is $\nn$ and the scales are $\sigma_s \colon n \mapsto n^s$ for each $s \in \nn$. The function $\zeta_M$ on the set of all the scales on~$M$ is what we call the (generalized) Riemann zeta function of~$M$. As the primary result in the corresponding section, we extend Euler's classical product formula to Krull monoids with torsion class groups using the introduced and more general version of the Riemann zeta function: we provide a representation of $\zeta_{M}(\sigma)$ as a product in terms of strong atoms that generalizes Euler's classical product formula. We also highlight, as corollaries, the cases of unique factorization monoids and half-factorial monoids. In doing so, we present criteria for the infinitude of strong atoms in Krull monoids with torsion class groups. This applies specially to monoids satisfying the unique factorization property and, moreover, it explains in a certain sense the existence of infinitely many primes not only in the classical case of $\nn$ but also beyond.
\smallskip

Motivated by the arithmetic of the multiplicative monoid $\nn$ of natural numbers, we provide throughout Sections~\ref{sec:ST and KM} and~\ref{sec:Zeta function for KM and Euler Product} several more general examples, including non-factorial multiplicative submonoids of $\nn$ and rings of integers of algebraic number fields with their Dedekind zeta functions.

\bigskip
\section{Background}
\label{sec:background}

\smallskip
\subsection{General Notation} 

Following standard notation, we let $\zz$, $\qq$, $\rr$, and $\cc$ denote the set of integers, rational numbers, real numbers, and complex numbers, respectively. In addition, we let $\pp, \nn$, and~$\nn_0$ denote the set of primes, positive integers, and nonnegative integers, respectively. When we write $n = \prod_{p \in \pp} p^{n(p)}$ for some $n \in \nn$, we implicitly assume that this is the standard prime decomposition of $n$: the sequence $(n(p))_{p \in \pp}$ consists of nonnegative integers and almost all its terms equal zero. For $p \in \pp$ and $n \in \nn$, we let $\ff_{p^n}$ stand for the finite field of cardinality $p^n$. For $b,c \in \zz$ with $b \le c$, we let $\ldb b,c \rdb$ denote the discrete closed interval between~$b$ and~$c$, i.e.,
\[
	\ldb b,c \rdb = \{n \in \zz : b \le n \le c\}.
\]
Also, for $S \subseteq \rr$ and $r \in \rr$, we set $S_{\ge r} = \{s \in S : s \ge r\}$ and $S_{> r} = \{s \in S : s > r\}$. For every nonzero $q \in \qq$, we denote the unique $n \in \zz$ and $d \in \nn$ such that $q = n/d$ and $\gcd(n,d)=1$ by $\mathsf{n}(q)$ and $\mathsf{d}(q)$, respectively. For any nonempty set $I$, we let $\delta_{ij}$ denote the \emph{Kronecker's delta} on $I$, which means that for any indices $i,j \in I$, the equality $\delta_{ij} = 1$ holds when $i=j$ while the equality $\delta_{ij} = 0$ holds otherwise\footnote{Although the notation we use for the Kronecker's delta does not specify its domain of indices, this is the classical notation and hardly a source of ambiguity.}. Finally, for two disjoint sets $S$ and $T$, we often write their union as $S \sqcup T$ instead of $S \cup T$ to emphasize that they do not overlap.

\smallskip
\subsection{Commutative Monoids and Factorizations} 

A semigroup with an identity element is called a \emph{monoid}. For the rest of this section, monoids will be written multiplicatively. A commutative monoid~$M$ is said to be \emph{cancellative} provided that, for all $b,c,d \in M$, the equality $b \cdot c = b \cdot d$ implies that $c=d$. Throughout this paper, we will tacitly assume that every monoid we deal with is cancellative and commutative. We let $M^\bullet$ denote the set consisting of all elements of $M$ except the identity. The group consisting of all the units (i.e., invertible elements) of $M$ is denoted by $\uu(M)$. The quotient $M/\mathcal{U}(M)$ is a monoid, which is denoted by $M_{\text{red}}$. The monoid $M$ is called \emph{reduced} when $\uu(M)$ is the trivial group, in which case we identify $M_{\text{red}}$ with $M$. The \emph{quotient group} of $M$, here denoted by $\gp(M)$, is the unique abelian group (up to isomorphism) satisfying that any abelian group containing an isomorphic image of~$M$ also contains an isomorphic image of $\gp(M)$. The monoid $M$ is called \emph{torsion-free} if $\gp(M)$ is a torsion-free abelian group. The \emph{rank} of $M$, denoted by $\text{rank} \, M$, is the rank of $\gp(M)$ as a $\zz$-module, that is, the dimension of the vector space $\qq \otimes_\zz \gp(M)$ over~$\qq$. If~$S$ is a subset of $M$, then we let $\langle S \rangle$ denote the submonoid of $M$ generated by $S$. If $M = \langle S \rangle$ for some finite set $S$, then $M$ is said to be \emph{finitely generated}.
\smallskip

For $b,c \in M$, we say that $b$ \emph{divides} $c$ \emph{in} $M$ if there exists $b' \in M$ such that $c = b \cdot b'$, in which case we write $b \mid_M c$, dropping the subscript $M$ precisely when $M$ is the free multiplicative monoid~$\nn$. Two elements $b,c \in M$ are \emph{associates} in $M$ if $b \mid_M c$ and $c \mid_M b$. An element $p \in M \setminus \mathcal{U}(M)$ is called \emph{prime} (resp., \emph{primary}) if for any $b,c \in M$ the two relations $p \mid_M b \cdot c$ and $p \nmid_M b$ guarantee that $p \mid_M c$ (resp., $p \mid_M c^n$ for some $n \in \nn$). Clearly, every prime is a primary element. We let $\mathcal{P}(M)$ denote the set of all primes in $M$. A submonoid $N$ of $M$ is \emph{divisor-closed} if for each $b \in N$ and $d \in M$ the relation $d \mid_M b$ implies that $d \in N$. Let $S$ be a nonempty subset of $M$. An element $d \in M$ is a \emph{common divisor} of~$S$ provided that $d \mid_M s$ for all $s \in S$. A common divisor $d$ of $S$ is a \emph{greatest common divisor} of $S$ if $d$ is divisible by all common divisors of $S$. We let $\gcd_M(S)$ denote the set consisting of all the greatest common divisors of $S$ (we will drop the subscript $M$ from $\gcd_M(S)$ when we see no danger of ambiguity). A \emph{divisor theory} of $M$ is a monoid homomorphism $\tau \colon M \to F$, where~$F$ is a free commutative monoid, satisfying the following two conditions:
\begin{enumerate}
	\item if $b, c \in M$ and $\tau(b) \mid_F \tau(c)$, then $b \mid_M c$;
	\smallskip
	
	\item for every $d \in F$ there exist $b_1, \dots, b_n \in M$ with $d = \gcd\{\tau(b_1), \dots, \tau(b_n)\}$.
\end{enumerate}

An element $a \in M \setminus \uu(M)$ is called an \emph{atom} if for all $b,c \in M$ the equality $a = b \cdot c$ implies that either $b \in \uu(M)$ or $c \in \uu(M)$. The set consisting of all the atoms of $M$ is denoted by $\mathcal{A}(M)$. Observe that $\mathcal{P}(M) \subseteq \mathcal{A}(M)$. Following Anderson and Quintero~\cite{AQ97}, we say that $M$ is an \emph{AP-monoid} if $\mathcal{P}(M) = \mathcal{A}(M)$. A formal product $a_1 \cdots a_\ell$, where $a_1, \dots, a_\ell \in \mathcal{A}(M)$, whose actual product in~$M$ is $b$ is called a \emph{factorization} of~$b$. Two factorizations are identified up to order and associates; that is, factorizations can be formally considered as elements in the free commutative monoid on the set $\mathcal{A}(M_{\text{red}})$. For each $b \in M$, we let $\mathsf{Z}_M(b)$ denote the set of all the factorizations of $b$, dropping the subscript $M$ from $\mathsf{Z}_M(b)$ when we see no risk of ambiguity. Following Cohn~\cite{pC68}, we say that~$M$ is \emph{atomic} if every nonunit element of $M$ has at least one factorization. A factorization $z$ consisting of $\ell$ atoms (counting repetitions) is said to have \emph{length} $\ell$ and, when it is convenient, we denote the length~$\ell$ by~$|z|$. For $b \in M$, we set
\[
	\mathsf{L}_M(b) := \{|z| : z \in \mathsf{Z}(b)\},
\]
and we drop the subscript $M$ from $\mathsf{L}_M(b)$ when we see no risk of ambiguity. The monoid $M$ is called a \emph{unique factorization monoid} (resp., \emph{half-factorial monoid}) provided that $|\mathsf{Z}(b)| = 1$ (resp., $|\mathsf{L}(b)| = 1$) for every $b \in M$. For simplicity, it is customary to let the acronyms UFM and HFM stand for the terms `unique factorization monoid' and `half-factorial monoid', respectively.

\smallskip
\subsection{Convex Cones} 

Let $V$ be a finite-dimensional vector space over an ordered field $\ff$. A nonempty convex subset~$C$ of $V$ is called a \emph{cone} provided that $C$ is closed under linear combinations with nonnegative coefficients. In particular, any cone $C$ is an additive submonoid of $V$, and $C$ is called \emph{pointed} if it is reduced as an additive monoid. If $X$ is a nonempty subset of~$V$, then the \emph{conic hull} of~$X$ is defined as follows:
\[
	\text{cone}_V(X) := \bigg\{ \sum_{i=1}^n c_i x_i : n \in \nn \ \text{and} \ (c_i, x_i) \in \ff_{\ge 0} \times X  \ \text{for every} \ i \in \ldb 1,n \rdb \bigg\}.
\]
When we see no risk of ambiguity, we write $\text{cone}(X)$ instead of $\text{cone}_V(X)$. A cone in $V$ is called \emph{simplicial} if it is the conic hull of a linearly independent set of vectors, while it is called \emph{polyhedral} provided that it can be expressed as the intersection of finitely many closed half-spaces of~$V$. It is clear that every simplicial cone is polyhedral. Farkas-Minkowski-Weyl Theorem states that a cone is polyhedral if and only if it is the conic hull of a finite set~\cite[Section~1]{CC58}.
\smallskip

Let $C$ be a cone in $V$. A \emph{face} of~$C$ is a cone $F$ contained in $C$ satisfying the following condition: for all $x,y \in C$ the fact that the open line segment
\[
	\{tx + (1-t)y : t \in \ff \text{ and } 0 < t < 1\}
\]
intersects $F$ implies that both $x$ and $y$ belong to $F$. If $F$ is a face of $C$ and $F'$ is a face of $F$, then it is clear that $F'$ must be a face of $C$. Let $\langle \cdot, \cdot \rangle$ be a fixed inner product on $V$. For a nonzero vector $u \in V$, consider the hyperplane $H := \{x \in V : \langle x, u \rangle = 0 \}$, and denote the closed half-spaces
\[
	\{x \in V : \langle x, u \rangle \le 0 \} \quad \text{ and } \quad \{x \in V : \langle x, u \rangle \ge 0 \}
\]
by $H_u^-$ and $H_u^+$, respectively. If a cone~$C$ satisfies that $C \subseteq H_u^-$ (resp., $C \subseteq H_u^+$), then $H$ is called a \emph{supporting hyperplane} of~$C$ and~$H_u^-$ (resp., $H_u^+$) is called a \emph{supporting half-space} of $C$. A face $F$ of $C$ is called \emph{exposed} if there exists a supporting hyperplane $H$ of $C$ such that $F = C \cap H$.

\bigskip
\section{Strong Atoms and the Decay Theorem}
\label{sec:ST and KM}

In this section, we discuss the notion of strong atoms, putting special emphasis on the set of strong atoms of Krull monoids. We also discuss the Decay Theorem for Krull monoids with torsion class groups, which will be an important tool in Section~\ref{sec:Zeta function for KM and Euler Product}. The Decay Theorem roughly states that Krull monoids with torsion class groups contain enough strong atoms to factor a power of any given nonunit element. Since the objects of interest in this paper are reduced Krull monoids, and every reduced Krull monoid can be embedded into a free commutative monoid (see, for instance, \cite[Theorem~2.4.8]{GH06}), from this point on we tacitly assume that every monoid mentioned in the rest of this paper is torsion-free.

\medskip
\subsection{Strong Atoms}

For the rest of this section, let $M$ be a monoid. An atom $a \in \mathcal{A}(M)$ is called a \emph{strong atom} if for each $b \in M \setminus \mathcal{U}(M)$ the fact that $b \mid_M a^n$ for some $n \in \nn$ implies that $b$ is associate with $a^k$ in~$M$ for some $k \in \nn$; that is, $|\mathsf{Z}(a^n)| = 1$ for every $n \in \nn$. We let $\mathcal{S}(M)$ denote the set consisting of all the strong atoms of $M$. The inclusion $\mathcal{P}(M) \subseteq \mathcal{S}(M)$ can be readily checked, while the inclusion $\mathcal{S}(M) \subseteq \mathcal{A}(M)$ follows directly from the given definitions. When $M$ is an AP-monoid (and so a UFM because $M$ is assumed to be atomic),
\[
	\mathcal{P}(M) = \mathcal{S}(M) = \mathcal{A}(M).
\]
However, the inclusions $\mathcal{P}(M) \subseteq \mathcal{S}(M)$ and $\mathcal{S}(M) \subseteq \mathcal{A}(M)$ may be strict, as we will see in Examples~\ref{ex:strong atoms of rank-1 monoids}, \ref{ex:strong atom of a rank-3 monoid}, and~\ref{ex:strong atoms, a rank-2 example}. Also, in Example~\ref{ex:Hilbert monoid} we will determine the set of strong atoms of the Hilbert monoid $H$ (to be properly introduced later) and obtain that $\mathcal{P}(H) \subsetneq \mathcal{S}(H) \subsetneq \mathcal{A}(H)$. It turns out that the only rank-$1$ monoid containing strong atoms is $\nn_0$ (up to isomorphism), even though some rank-$1$ monoids may contain infinitely many non-associate atoms.

\begin{ex} \label{ex:strong atoms of rank-1 monoids}
	Let $M$ be an (additive) submonoid of $\qq$ that is atomic but not a group. We first claim that $M$ cannot contain a positive number and a negative number simultaneously. Suppose, by way of contradiction, that this is not the case, and fix $q \in M^\bullet$. Then we can take $r \in M$ such that $qr < 0$ and, after taking $m \in \nn$ such that $\mathsf{d}(q) \mathsf{n}(r)$ divides $-m-1$, we see that $n := (-m-1) \frac qr \in \nn$. Thus, $-q = mq + nr \in M$, which implies that $q \in \uu(M)$. However, the fact that every element of~$M$ is a unit contradicts that $M$ is not a group, and so our claim follows. Hence, after replacing~$M$ by its isomorphic copy $-M$  if necessary, we can assume that $M$ is an atomic submonoid of $\qq_{\ge 0}$. If $M = s_0 \nn_0$ for some $s_0 \in \qq_{> 0}$, then $M$ is a UFM with $\mathcal{A}(M) = \{s_0\}$. In this case, $s_0$ is a prime element of $M$ and, therefore, a strong atom. Assume, otherwise, that $M \neq s\nn_0$ for any $s \in \qq_{> 0}$. We proceed to show that, in this case, $M$ contains no strong atoms. Take $a \in \mathcal{A}(M)$. Since $M$ is atomic and $M \neq a \nn_0$, we can pick $b \in \mathcal{A}(M) \setminus a \nn_0$. Observe that $b \mid_M \mathsf{n}(a) \mathsf{n}(b) = (\mathsf{d}(a)\mathsf{n}(b))a$, and so the fact that $M$ is atomic guarantees the existence of a factorization of $(\mathsf{d}(a)\mathsf{n}(b))a$ in $M$ containing the atom~$b$. Hence~$a$ is not a strong atom of $M$ and, as a result, $\mathcal{S}(M)$ is empty. In particular, it is well known and not hard to verify that $M_0 := \big\langle \frac1p : p \in \pp \big\rangle$ is an additive submonoid of $\qq$ with $\mathcal{A}(M_0) = \big\{ \frac1p : p \in \pp \big\}$, and so $M_0$ is an example of an atomic monoid with infinitely many non-associate atoms containing no strong atoms and, therefore, satisfying that
	\[
		\mathcal{P}(M_0) = \mathcal{S}(M_0) \subsetneq \mathcal{A}(M_0).
	\]
	The atomic structure and the arithmetic of lengths of additive submonoids of $\qq$ have been actively studied during the past few years (see \cite{GGT21,fG19} and references therein).
\end{ex}

There are also finite-rank atomic monoids whose sets of atoms and strong atoms coincide and strictly contain their corresponding sets of primes. This observation is illustrated in the following example.

\begin{ex} \label{ex:strong atom of a rank-3 monoid}
	Let $M$ be the additive submonoid of the free abelian group $\zz^3$ generate by the set $A := \{(\pm 1, \pm 1, 1) \}$. It is clear that $M$ is a rank-$3$ atomic monoid with $\mathcal{A}(M) = A$. One can actually verify that every element of $A$ is a strong atom of $M$ (this is also an immediate consequence of Proposition~\ref{prop:strong atoms of finite-rank monoids} below). On the other hand, the equality
	\[
		(-1,-1,1) + (1,1,1) = (-1,1,1) + (1,-1,1)
	\]
	implies that $\mathcal{P}(M)$ is empty. Therefore
	\[
		\mathcal{P}(M) \subsetneq \mathcal{S}(M) = \mathcal{A}(M).
	\]
	It particular, from this example we can see that a monoid generated by strong atoms may not be a UFM (unlike the standard fact that a monoid generated by primes is a UFM). The atomic structure of additive submonoids of $\zz^n$ has been studied by several authors in the past few years (see the recent papers~\cite{fG20a,gL22,LRT23}).
\end{ex}

In the following example, we exhibit a finite-rank atomic monoid whose set of strong atoms strictly contains its set of primes and is strictly contained in its set of atoms.

\begin{ex} \label{ex:strong atoms, a rank-2 example}
	Consider the additive submonoid $M$ of the free abelian group $\zz^2$ generated by the elements $a_0 := (0,2)$, $a_1 := (1,1)$, and $a_2 := (2,0)$. One can easily see that $\mathcal{A}(M) = \{a_0, a_1, a_2\}$, and so $M$ is atomic. From the fact that $\nn_0 a_0$ is a UFM and a divisor-closed submonoid of $M$ with $\mathcal{A}(\nn_0 a_0) = \{a_0\}$, we infer that $a_0$ is a strong atom of~$M$. By a similar argument, we obtain that~$a_2$ is also a strong atom of~$M$ (the fact that $a_0$ and $a_2$ are strong atoms of $M$ is a special case of Proposition~\ref{prop:strong atoms of finite-rank monoids}). On the other hand, as $2a_1 = a_0 +  a_2$, the atom $a_1$ is not a strong atom of $M$. The same equality shows that the strong atoms $a_0$ and $a_2$ are not primes. Hence $\mathcal{P}(M)$ is empty while $\mathcal{S}(M) = \{a_0, a_2\}$ and, therefore,
	\[
		\mathcal{P}(M) \subsetneq \mathcal{S}(M) \subsetneq \mathcal{A}(M).
	\]
\end{ex}

The description of the sets of strong atoms in the monoids discussed in Examples~\ref{ex:strong atoms of rank-1 monoids}, \ref{ex:strong atom of a rank-3 monoid}, and~\ref{ex:strong atoms, a rank-2 example} can be deduced from the next proposition, which gives a geometric characterization of the set of strong atoms of a finite-rank monoid. Finitely generated submonoids of finite-rank free abelian groups are the so-called \emph{affine monoids}, which have been systematically investigated by many authors given the relevant role they play in combinatorial commutative algebra and affine toric geometry (see the graduate textbooks~\cite{CLS11,MS05} and references therein). Factorizations in submonoids of finite-rank free abelian groups have been recently studied in the literature (see, for instance, the papers \cite{GKL19,fG20}).

\begin{prop} \label{prop:strong atoms of finite-rank monoids}
	Let $M$ be a finite-rank atomic (additive) monoid, and let $V$ be the $\qq$-vector space $\qq \otimes_\zz \emph{gp}(M)$. Then an element $a \in M^\bullet$ is a strong atom of~$M$ if and only if $\qq_{\ge 0} a$ is a one-dimensional face of $\emph{cone}_V(M)$ such that $M \cap \qq_{\ge 0} a = \nn_0 a$.
\end{prop}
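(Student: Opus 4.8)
The plan is to prove both implications by converting the strong-atom condition into a divisibility statement inside $M$ and then matching divisibility against the face structure of the cone $C := \cone_V(M)$. Throughout I use two elementary facts: the monoids considered here are reduced, so a nonunit element is simply a nonzero element, and $\gp(M)$ sits inside $V$ as a finite-rank lattice of integer vectors; and every $x \in C$ has a positive integer multiple lying in $M$ (write $x = \sum_{i=1}^{n} c_i m_i$ with $c_i \in \qq_{\ge 0}$ and $m_i \in M$, and clear the denominators of the $c_i$). I will freely write $a^n$ for the $n$-th ``power'' $na$ of $a$ in the additive monoid $M$.

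For the direction $(\Leftarrow)$, assume $\qq_{\ge 0}a$ is a one-dimensional face of $C$ with $M \cap \qq_{\ge 0}a = \nn_0 a$. Then $a \ne 0$, and $a \notin \uu(M)$: otherwise $a, -a \in C$, the open segment joining them meets the face $\qq_{\ge 0}a$ at $0$, and so $-a \in \qq_{\ge 0}a$, which is absurd. Now ``$a$ is an atom'' and ``$a$ is a strong atom'' both follow from a single template: if $b \in M$ and $na = b + c$ with $c \in M$ (take $n = 1$ for atomicity, and $n$ arbitrary with $b \mid_M a^n$ for the strong-atom condition), then $2b, 2c \in C$ have midpoint $na$; if $2b = 2c$ then $b = c$ and $2b = na \in M \cap \qq_{\ge 0}a = \nn_0 a$, while if $2b \ne 2c$ the point $na \in \qq_{\ge 0}a$ lies in the open segment between $2b$ and $2c$, so the face property forces $2b \in \qq_{\ge 0}a$, hence $b \in M \cap \qq_{\ge 0}a = \nn_0 a$. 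Either way $b = ka$ for some $k \in \nn_0$. For atomicity this makes both factors of $a$ nonnegative multiples of $a$ adding to $a$, so one of them is $0$, a unit; for the strong-atom condition, $b$ being a nonunit forces $k \ge 1$, so $b = a^k$, as desired.

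For the direction $(\Rightarrow)$, assume $a \in \mathcal{S}(M)$. Then $a$ is an atom, so $a \ne 0$ and $\dim \qq_{\ge 0}a = 1$. For $M \cap \qq_{\ge 0}a = \nn_0 a$, the inclusion $\supseteq$ is clear, and if $0 \ne b = (p/q)a \in M$ with $p,q \in \nn$, then $a^p = pa = qb = b + (q-1)b$ shows $b \mid_M a^p$, so the strong-atom property gives $b = a^k \in \nn_0 a$. It remains to show $\qq_{\ge 0}a$ is a face of $C$. Let $x, y \in C$ with $z := tx + (1-t)y \in \qq_{\ge 0}a$ for some $t \in (0,1)$, say $z = \mu a$ with $\mu \in \qq_{\ge 0}$. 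Choose $N \in \nn$ so that $P := Nx$ and $Q := Ny$ lie in $M$ and $N\mu \in \nn_0$; then $N\mu a = tP + (1-t)Q$ in $\gp(M)$. If $P = Q$ then $x = y = z \in \qq_{\ge 0}a$; otherwise $P - Q \ne 0$, so reading a nonzero coordinate of the identity $N\mu a - Q = t(P - Q)$ (all three vectors lie in the lattice $\gp(M)$) shows $t = p/s$ with $p, s \in \nn$ and $1 \le p \le s-1$, whence
\[
L a = p P + (s - p) Q \quad \text{in } M, \qquad L := s N \mu \in \nn_0 .
\]
Since $p \ge 1$ and $(p-1)P + (s-p)Q \in M$, we get $P \mid_M a^L$, and symmetrically $Q \mid_M a^L$. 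If $L = 0$ this forces $P = Q = 0$ (a reduced monoid admits no nontrivial sum equal to $0$), contradicting $P \ne Q$; hence $L \ge 1$, and now each of $P$ and $Q$ is either $0$ or a nonunit dividing a power of $a$, so by the strong-atom property each is a nonnegative multiple of $a$. Therefore $x = P/N$ and $y = Q/N$ lie in $\qq_{\ge 0}a$, which proves $\qq_{\ge 0}a$ is a face.

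I expect the last paragraph to be the main obstacle. The strong-atom hypothesis lives purely inside $M$, so the real work is passing from a convex combination with a priori real coefficient $t$ witnessing $z \in \qq_{\ge 0}a$ to an honest equation $La = pP + (s-p)Q$ inside $M$ with positive integer coefficients; the rationality of $t$ (forced by $\gp(M)$ being a finite-rank lattice) together with clearing denominators is exactly what makes this possible, after which applying the strong-atom property to the ``pieces'' $P$ and $Q$ concludes the argument. The remaining verifications (one-dimensionality, the equality $M \cap \qq_{\ge 0}a = \nn_0 a$, and the atom property) are routine once the correct geometric picture is in place.
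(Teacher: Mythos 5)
Your proof is correct and follows essentially the same route as the paper's: the midpoint-of-doubled-elements trick to exploit the face property in the $(\Leftarrow)$ direction, and clearing denominators to turn a rational conic combination landing on $\qq_{\ge 0}a$ into an integer relation inside $M$ whose summands divide a power of $a$ in the $(\Rightarrow)$ direction. The only cosmetic differences are that you use the divisibility form of the strong-atom condition throughout and keep $Nx$, $Ny$ whole where the paper decomposes $x$ and $y$ into conic generators $v_i \in M$.
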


\begin{proof}
	Let $d$ be the rank of $M$, and set $C := \text{cone}_V(M)$. Since $M$ is cancellative and torsion-free, the natural map $M \to \gp(M) \to \qq \otimes_\zz \gp(M)$ induces an embedding of $M$ into $V$, and so we can identify~$M$ with its isomorphic copy inside $V$. Hence, after identifying $V$ with $\qq^d$, we can assume that $M$ is a submonoid of $\zz^d$.
	
	For the direct implication, suppose that $a$ is a strong atom of $M$. To argue that $\qq_{\ge 0} a$ is a one-dimensional face of $C$, take $x,y \in C$ such that the open segment
	\[
		\{tx + (1-t)y : t \in \qq \cap (0,1)\}
	\]
	intersects $\qq_{\ge 0} a$. Therefore we can pick $c,c' \in \nn$ such that $cx + c'y \in \nn a$. Since $x,y \in C$, we can take pairwise distinct nonzero elements $v_1, \dots, v_k \in M$ such that $x = \sum_{i=1}^k q_i v_i$ and $y = \sum_{i=1}^k q'_i v_i$ for some $q_1, \dots, q_k, q'_1, \dots, q'_k \in \qq_{\ge 0}$ such that $\max\{q_i, q'_i\} > 0$ for every $i \in \ldb 1,k \rdb$. Observe that \[
		\sum_{i=1}^k (cq_i + c'q_i')v_i = cx + c'y \in \nn a.
	\]
	As a result, one can take $c_1, \dots, c_k \in \nn$ such that $\sum_{i=1}^k c_i v_i \in \nn a$. Since $a$ is a strong atom and $M$ is atomic, it follows that $v_1, \dots, v_k \in \nn a$, whence $x \in \qq_{\ge 0} a$ and $y \in \qq_{\ge 0} a$. Consequently, $\qq_{\ge 0} a$ is a one-dimensional face of $C$.
	
	For the second part of the direct implication, set $N := M \cap \qq_{\ge 0} a$. Note that $N$ is reduced as it is a submonoid of $\qq_{\ge 0}a$. In addition, since $\qq_{\ge 0} a$ is a face of $C$, we see that $N$ is a divisor-closed submonoid of $M$. Hence $N$ is atomic, and $a$ is also a strong atom of $N$. Let $b$ be an atom of $N$, and write $b = q a$ for some $q \in \qq_{> 0}$. Observe that $b \mid_N \mathsf{n}(q) a$. As a consequence, the fact that $a \in \mathcal{S}(N)$ ensures that $b = a$. Thus, $a$ is the only atom of the reduced atomic monoid $N$, and this implies that $M \cap \qq_{\ge 0} a = N = \nn_0 a$.
	\smallskip
	
	For the converse, suppose that $\qq_{\ge 0} a$ is a one-dimensional face of $C$ with $M \cap \qq_{\ge 0} a = \nn_0 a$. Since $\qq_{\ge 0} a$ is a face of $C$, the monoid $\nn_0 a$ is a divisor-closed submonoid of~$M$ such that $\mathcal{A}(\nn_0 a) = \{a\}$. Thus, the fact that $\nn_0 a$ is a divisor-closed submonoid of $M$ implies that $a \in \mathcal{A}(M)$. To argue that $a$ is actually a strong atom of $M$, suppose that $\sum_{i=1}^k a_i \in \nn a$ for some $a_1, \dots, a_k \in \mathcal{A}(M)$. Fix $j \in \ldb 1,k \rdb$. After setting $b_j := \big(\sum_{i=1}^k a_i \big) - a_j$, we can write
	\[
		\frac 12 a_j + \frac 12 b_j = \frac m2 a \in \qq_{\ge 0} a
	\]
	for some $m \in \nn$. Since $\qq_{\ge 0} a$ is a face of $C$, we obtain that $a_j \in \qq_{\ge 0}a$, and so $a_j \in M \cap \qq_{\ge 0} a = \nn_0 a$. As $a_j \in \mathcal{A}(M)$, the fact that $\nn_0 a$ is a divisor-closed submonoid of $M$ guarantees that $a_j \in \mathcal{A}(\nn_0 a)$ and, therefore, $a_j = a$. Hence $a_i = a$ for every $i \in \ldb 1,k \rdb$. Thus, we conclude that every element of $\nn a$ has a unique factorization in $M$, which means that $a \in \mathcal{S}(M)$.
\end{proof}

\begin{cor} \label{cor:strong atoms of finite-rank monoids}
	Let $M$ be a finite-rank atomic (additive) monoid, and let $V$ be the $\qq$-vector space $\qq \otimes_\zz \emph{gp}(M)$. Then the following statements hold.
	\begin{enumerate}
		\item If $\emph{cone}_V(M)$ is polyhedral, then $|\mathcal{S}(M)| < \infty$.
		\smallskip
		
		\item If $\emph{cone}_V(M)$ is simplicial, then $|\mathcal{S}(M)| \le \emph{rank} \, M$.
	\end{enumerate}
\end{cor}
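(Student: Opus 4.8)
The plan is to read off the strong atoms of $M$ from the one-dimensional faces of the cone $C := \text{cone}_V(M)$ via Proposition~\ref{prop:strong atoms of finite-rank monoids}, and then to count those faces. As in the proof of that proposition, I would identify $M$ with a submonoid of $\zz^d$, where $d = \text{rank}\, M = \dim_\qq V$. The first step is the elementary observation that each one-dimensional face of $C$ contains at most one strong atom of $M$: if $a$ and $a'$ are strong atoms with $\qq_{\ge 0} a = \qq_{\ge 0} a'$, then Proposition~\ref{prop:strong atoms of finite-rank monoids} gives $\nn_0 a = M \cap \qq_{\ge 0} a = M \cap \qq_{\ge 0} a' = \nn_0 a'$, so $a' = na$ and $a = ma'$ for some $m,n \in \nn$, forcing $mn = 1$ (since $a \neq 0$ inside the torsion-free group $\zz^d$) and hence $a = a'$. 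Since every strong atom $a$ determines the one-dimensional face $\qq_{\ge 0} a$, this reduces both parts to bounding the number of one-dimensional faces (equivalently, extreme rays) of $C$.

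For part (1), I would use the Farkas--Minkowski--Weyl Theorem to write $C = \text{cone}(F)$ for a finite set $F \subseteq V$ of nonzero vectors, and then show that every one-dimensional face of $C$ has the form $\qq_{\ge 0} f$ for some $f \in F$. Given a one-dimensional face $\qq_{\ge 0} a$ with $a \ne 0$, write $a = \sum_{f \in F} c_f f$ with all $c_f \ge 0$, choose $f_0$ with $c_{f_0} > 0$, and set $r := a - c_{f_0} f_0 \in C$; for any $t \in (0,1)$ the identity $a = t\big(\tfrac{c_{f_0}}{t} f_0\big) + (1-t)\big(\tfrac{1}{1-t} r\big)$ places $a$ on the open segment between the points $\tfrac{c_{f_0}}{t} f_0 \in \qq_{\ge 0} f_0 \subseteq C$ and $\tfrac{1}{1-t} r \in C$, so the defining property of a face forces $\tfrac{c_{f_0}}{t} f_0 \in \qq_{\ge 0} a$, whence $\qq_{\ge 0} a = \qq_{\ge 0} f_0$. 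Therefore $C$ has at most $|F|$ one-dimensional faces, and the first paragraph yields $|\mathcal{S}(M)| \le |F| < \infty$. (Alternatively, one may simply invoke the standard fact that a polyhedral cone has finitely many faces.)

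For part (2), suppose $C$ is simplicial, say $C = \text{cone}(v_1, \dots, v_k)$ with $v_1, \dots, v_k$ linearly independent. Since $M$ generates $\text{gp}(M)$ as a group it spans $V$, so $V = \text{span}_\qq(v_1, \dots, v_k)$ and $k = \dim_\qq V = \text{rank}\, M$. I would then verify that the one-dimensional faces of $C$ are precisely $\qq_{\ge 0} v_1, \dots, \qq_{\ge 0} v_k$: each $\qq_{\ge 0} v_i$ is a face because, by linear independence, any convex combination of elements of $C$ lying on the ray $\qq_{\ge 0} v_i$ must have all its $v_j$-coordinates with $j \ne i$ equal to zero; conversely, the argument of part (1) applied with $F = \{v_1, \dots, v_k\}$ shows that a one-dimensional face $\qq_{\ge 0} a$ equals $\qq_{\ge 0} v_i$ for every $v_i$ occurring with positive coefficient in $a$, and no two distinct $v_i$ span the same ray. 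Hence $C$ has exactly $k$ one-dimensional faces, and the first paragraph gives $|\mathcal{S}(M)| \le k = \text{rank}\, M$.

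The substance is all in Proposition~\ref{prop:strong atoms of finite-rank monoids}; the rest is bookkeeping with faces. The one point I expect to require a little care is the passage from ``polyhedral'' to ``finitely many one-dimensional faces,'' since the notion of face in force here is the convex-geometric one rather than the description via supporting hyperplanes; the short segment argument above handles it, and in any case this is a well-documented fact about polyhedral cones.
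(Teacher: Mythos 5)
Your proof is correct and follows essentially the same route as the paper's: reduce to counting one-dimensional faces of $\mathrm{cone}_V(M)$ via Proposition~\ref{prop:strong atoms of finite-rank monoids}, observe that each such face carries at most one strong atom, and then bound the number of those faces in the polyhedral and simplicial cases. The only difference is that you spell out (via the open-segment argument) the standard convex-geometry facts that a finitely generated cone has finitely many extreme rays, each spanned by a generator, which the paper leaves implicit.
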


\begin{proof}
	(1) In light of Proposition~\ref{prop:strong atoms of finite-rank monoids}, each strong atom of $M$ belongs to a one-dimensional face of $C := \text{cone}_V(M)$. Since $C$ is polyhedral, it suffices to verify that every one-dimensional face of $C$ contains only finitely many strong atoms of $M$. This is the case as, according to Proposition~\ref{prop:strong atoms of finite-rank monoids}, if a one-dimensional face $F$ of $C$ contains a strong atom $a$ of $M$, then the divisor-closed submonoid $M \cap F$ of $M$, which contains $\mathcal{S}(M) \cap F$, equals $\nn_0 a$ and so contains precisely one atom of $M$.
	\smallskip
	
	(2) As in the previous part, set $C := \text{cone}_V(M)$. Since $M$ has finite rank, $V$ has finite dimension, and so there are finitely many linearly independent vectors $v_1, \dots, v_k$ with $k \le \dim V = \text{rank} \, M$ such that $C = \text{cone}(v_1, \dots, v_k)$. Because $v_1, \dots, v_k$ are linearly independent, the one-dimensional faces of~$C$ are $\qq_{\ge 0} v_1, \dots, \qq_{\ge 0} v_k$. Finally, it follows from Proposition~\ref{prop:strong atoms of finite-rank monoids} that all the strong atoms of $M$ belong to one-dimensional faces of $C$ and each one-dimensional face of $C$ contains at most one strong atom of~$M$ (argued in the previous paragraph). Hence $|\mathcal{S}(M)| \le k \le \text{rank} \, M$.
\end{proof}

\begin{rem}
	In both Proposition~\ref{prop:strong atoms of finite-rank monoids} and Corollary~\ref{cor:strong atoms of finite-rank monoids}, we have assumed that the monoids in the statements are atomic. There are submonoids of $\zz^n$ that are not atomic: for instance, the nonnegative cone of $\zz^2$ with respect to the lexicographical order is a submonoid of $\zz^2$ that is not atomic.
\end{rem}

We can use Proposition~\ref{prop:strong atoms of finite-rank monoids} to produce finite-rank atomic monoids with infinitely many strong atoms.

\begin{ex}
	For each $n \in \nn_0$, set $a_n :=  (n^2-1, 2n, n^2+1)$, and let $M$ be the additive submonoid of $\zz^3$ generated by the set of vectors $A := \{a_n : n \in \nn_0 \}$. Observe that, for each $n \in \nn_0$, the ray $\qq_{\ge 0} a_n$ intersects the plane $z=1$ at the upper-half circle determined by $x^2 + y^2 = 1$ and $y \ge 0$, and so the fact that such upper-half circle is concave down (in the plane determined by $z=1$) implies that each of the rays $\qq_{\ge 0} a_n$ is a one-dimensional face of $\text{cone}_{\qq^3}(M)$ satisfying that $M \cap \qq_{\ge 0} a_n = \nn_0 a_n$. Hence it follows from Proposition~\ref{prop:strong atoms of finite-rank monoids} that $A \subseteq \mathcal{S}(M)$. As a result, we conclude that $M$ is a rank-$3$ atomic monoid with $\mathcal{S}(M) = \mathcal{A}(M) = A$.
\end{ex}

Every monoid we have dealt with so far has finite rank. We conclude this subsection with two examples of infinite-rank monoids with infinitely many strong atoms. 

\begin{ex} \label{ex:free commutative monoid}
	Let $I$ be a nonempty set. A function $f \colon I \to \nn_0$ is said to have \emph{finite support} provided that $f(i) = 0$ for all but finitely many $i \in I$. Let $\nn_0^{(I)}$ denote the set consisting of all functions $f \colon I \to \nn_0$ having finite support. Under the standard pointwise addition of functions, $\nn_0^{(I)}$ is the free commutative monoid on the set $\{e_i : i \in I\}$, where for each $i \in I$ the function $e_i$ is the elementary function determined by the assignments $e_i \colon j \mapsto \delta_{ij}$ (for all $j \in I$). It is clear that $\gp(\nn_0^{(I)}) = \zz^{(I)}$, and so $\text{rank} \, \nn_0^{(I)} = |I|$. In addition, the fact that $\nn_0^{(I)}$ is the free commutative monoid on $I$ guarantees that $\nn_0^{(I)}$ is a UFM with
	\[
		\mathcal{A}(\nn_0^{(I)}) = \mathcal{S}(\nn_0^{(I)}) = \mathcal{P}(\nn_0^{(I)}) = \{e_i : i \in I\}.
	\]
	Thus, $|\mathcal{S}(\nn_0^{(I)})| = |I|$, and so $\nn_0^{(I)}$ has infinitely many strong atoms provided that $I$ is an infinite set.
\end{ex}

The monoids we are most interested in are Krull monoids with torsion class groups. We proceed to discuss the set of strong atoms of the Hilbert monoid, which is perhaps the most elementary Krull monoid with torsion class group among those Krull monoids that are not UFMs.

\begin{ex} \label{ex:Hilbert monoid}
	Consider the multiplicative submonoid $H := 4 \nn_0 + 1$ of $\nn$, which is often called the \emph{Hilbert monoid}. It is clear that $H$ is atomic. In addition, it is well known and not difficult to argue that $\mathcal{A}(H) := P \cup Q$, where
	\[
		P := \big\{ p \in \pp : \ p \equiv 1 \! \! \! \pmod{4}\big\} \quad \text{ and } \quad Q := \big\{ p_1 p_2 : \ p_1, p_2 \in \pp \ \text{ and } \ p_1, p_2 \equiv 3 \! \! \! \pmod 4 \big\}.
	\]
	Clearly, every element in $P$ is prime in $H$ and, therefore, a strong atom of $H$. Observe, on the other hand, that for any distinct $p_1, p_2 \in \pp$ with $p_1, p_2 \equiv 3 \pmod 4$, the element $(p_1 p_2)^2$ has two factorizations in $H$, namely, $(p_1 p_2)^2$ and $p_1^2 p_2^2$. From this, we can deduce that none of the atoms in~$Q$ are primes in $H$, and so
	\[
		\mathcal{P}(H) = P.
	\]
	From the same statement, we can also deduce that the atoms in $Q$ that are the product of two distinct elements of $\pp$ cannot be strong atoms of $H$. On the other hand, for each $p \in \pp$ with $ p \equiv 3 \pmod 4$, any even power of $p$ has a unique factorization in $H$, and so $p^2$ is a strong atom of $H$. Therefore
	\[
		\mathcal{S}(H) = P \cup \big\{ p^2 : p \in \pp \ \text{ and } \ p \equiv 3 \! \! \! \pmod 4 \big\},
	\]
	and so $H$ contains infinitely many strong atoms that are not prime (observe that $H$ also contains infinitely many atoms that are not strong atoms). Finally, we notice that, as an immediate consequence of the Fundamental Theorem of Arithmetic, any two factorizations of the same element of $H$ must contain the same number of atoms  from $P$ (counting repetitions) and, therefore, the same number of atoms from $Q$ (counting repetitions). Hence $H$ is an HFM.
\end{ex}

\medskip
\subsection{Krull Monoids}

The monoid $M$ is called a \emph{Krull monoid} if $M$ has a divisor theory. It was proved by the second author that an integral domain is a Krull domain if and only if its multiplicative monoid is a Krull monoid \cite[Proposition]{uK89}. It is not hard to verify that $M$ is a Krull monoid if and only if $M_{\text{red}}$ is a Krull monoid. In light of this, we will tacitly assume that every Krull monoid we consider from now on is reduced. 

Let $M$ be a Krull monoid, and let $\tau \colon M \to F$ be a divisor theory of~$M$ for some free commutative monoid~$F$. In this case, $F$ is uniquely determined by $M$ (up to canonical isomorphism), and the quotient monoid $F/\tau(M)$ is actually an abelian group, which is denoted by $\text{Cl}(M)$ and called the \emph{class group} of~$M$ (see \cite[Theorem~2.4.7]{GH06}). The basis elements of $F$ are called the \emph{prime divisors} of~$M$. There is an extensive literature about the arithmetic of Krull monoids (see~\cite{GZ19} and references therein).

Every UFM is a Krull monoid: indeed, UFMs are precisely the Krull monoids with trivial class groups \cite[Corollary~2.3.13 and Theorem~2.4.7]{GH06}. In particular, the free commutative monoids in Example~\ref{ex:free commutative monoid} are Krull monoids with trivial class groups. The Hilbert monoid, on the other hand, is a Krull monoid that is not a UFM.

\begin{ex} \label{ex:Hilbert monoid is Krull}
	Observe that the multiplicative submonoid $F := 2\nn_0 + 1$ of $\nn$ is a free commutative monoid with basis $\pp \setminus \{2\}$, and it contains the Hilbert monoid $H$ as a submonoid. Now notice that for all $b,c \in H$, the relations $b \mid_H c$ and $b \mid_F c$ are equivalent. In addition, each $d \in F \setminus H$ is the greatest common divisor in $F$ of $d p_1$ and $d p_2$ for any distinct primes $p_1, p_2 \in 4 \nn_0 + 3$ such that $p_1 \nmid d$ and $p_2 \nmid d$. Thus, the inclusion $H \hookrightarrow F$ is a divisor theory and, therefore, $H$ is a Krull monoid whose set of prime divisors is $\pp \setminus \{2\}$. We have seen in Example~\ref{ex:Hilbert monoid} that $H$ is not a UFM, and so $\text{Cl}(H)$ is not the trivial group: indeed, $\text{Cl}(H) = F/H \cong \zz/2\zz$.
\end{ex}

\medskip
\subsection{The Decay Theorem}
\label{sec:Decay Theorem}

We are still assuming that the monoid $M$ is a Krull monoid. If $M$ has torsion class group, then an element of $M$ is a strong atom if and only if it is a primary atom \cite[Proposition~7.1.5]{GH06}. The Decay Theorem, which is the most crucial tool in this paper, states that every nonunit element of~$M$ has a power that uniquely decays into strong atoms. This applies especially to powers of atoms. Of course, it follows directly from the definition of an atom that an atom itself cannot decay, but when squared, for instance, it may decay into several other atoms. This decay of atoms by exercising some power is at the heart of all phenomena of non-unique factorizations when we restrict to the class of Krull monoids with torsion class groups (see Example~ \ref{ex:HF KM} and, for rings of integers of algebraic number fields, Example~ \ref{ex:Dedekind zeta function}). A formal statement of the Decay Theorem goes as follows.

\begin{theorem} \cite[Theorem~1]{KZ91}
	Let $M$ be a Krull monoid with torsion class group. Then for every $x \in M^\bullet$ there exist $m(x) \in \nn$ and $x(a) \in \nn_0$ for each $a \in \mathcal{S}(M)$ such that 
	\begin{equation} \label{eq:DT representation}
		x^{m(x)} = \prod_{a \in \mathcal{S}(M)} a^{x(a)}.
	\end{equation}
	Also, if $m(x)$ is minimally chosen, then the finite set $\{a \in \mathcal{S}(M) : x(a) > 0 \}$ is uniquely determined.
\end{theorem}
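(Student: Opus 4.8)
The plan is to push the whole statement into the free commutative monoid $F$ underlying a divisor theory $\tau\colon M\to F$ and to identify the strong atoms of $M$ with the prime divisors. Since $M$ is reduced, condition~(1) in the definition of a divisor theory makes $\tau$ injective, makes $a\mid_M b$ equivalent to $\tau(a)\mid_F\tau(b)$, and exhibits $\text{Cl}(M)=F/\tau(M)$, which is torsion by hypothesis. For a prime divisor $p$ of $M$, condition~(1) shows that $D_p:=\{k\in\nn_0:p^k\in\tau(M)\}$ is a submonoid of $\nn_0$ that is closed under the difference of a larger and a smaller element, hence $D_p=n_p\nn_0$ for a unique $n_p\in\nn$ (finite because $\text{Cl}(M)$ is torsion); let $a_p$ be the unique element of $M$ with $\tau(a_p)=p^{n_p}$. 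The first step is that each $a_p$ is a strong atom: it is an atom because a nontrivial factorization $a_p=bc$ would give $\tau(b)=p^{\,j}$ with $0<j<n_p$ and $p^{\,j}\in\tau(M)$, contradicting minimality of $n_p$; and it is primary because $a_p\mid_M bc$ together with $a_p\nmid_M b$ forces $\mathsf{v}_p(\tau(c))>0$ by comparing exponents of $p$, whence $a_p\mid_M c^{\,n}$ for $n$ large. Thus $a_p$ is a strong atom by~\cite[Proposition~7.1.5]{GH06}.

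The crux is the converse: every strong atom $a$ of $M$ is one of the $a_p$'s, so that $\tau$ restricts to a bijection from $\mathcal{S}(M)$ onto the set of prime divisors of $M$. Suppose, for contradiction, that $\tau(a)$ were divisible by two distinct prime divisors $p$ and $q$. Let $m$ be the least common multiple of the numbers $n_r$ as $r$ ranges over the finitely many prime divisors occurring in $\tau(a)$; then $n_r$ divides $m\,\mathsf{v}_r(\tau(a))$ for each such $r$, so with $e_r:=m\,\mathsf{v}_r(\tau(a))/n_r\in\nn$ we obtain
\[
	\tau(a^m)=\prod_r\tau(a_r)^{e_r}=\tau\!\Bigl(\prod_r a_r^{\,e_r}\Bigr),
\]
and the injectivity of $\tau$ produces a factorization $a^m=\prod_r a_r^{\,e_r}$ in which the distinct atoms $a_p$ and $a_q$ both occur. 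Since $\tau(a)$ has two distinct prime divisors in its support, $a$ is none of the $a_r$, so this factorization differs from the factorization of $a^m$ all of whose factors equal $a$; this contradicts $|\mathsf{Z}(a^m)|=1$. Hence $\tau(a)=p^{\,k}$ for a single prime divisor $p$, with $n_p\mid k$, and as $a$ is an atom we get $k=n_p$, i.e.\ $a=a_p$.

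Granting this dictionary, both assertions become arithmetic of exponents in $F$. For existence, given $x\in M^\bullet$ let $P_x$ be the finite nonempty set of prime divisors $p$ with $v_p:=\mathsf{v}_p(\tau(x))\ge 1$, and take $m(x)$ to be any common multiple of $\{n_p:p\in P_x\}$ (for instance, their least common multiple); then $n_p\mid m(x)\,v_p$ for every $p\in P_x$, and, setting $x(a_p):=m(x)\,v_p/n_p$ for $p\in P_x$ and $x(a):=0$ for every other strong atom $a$, the identity $\tau(x^{m(x)})=\prod_{p\in P_x}p^{\,m(x)v_p}=\prod_{p\in P_x}\tau(a_p)^{x(a_p)}$ combined with the injectivity of $\tau$ yields $x^{m(x)}=\prod_{a\in\mathcal{S}(M)}a^{x(a)}$. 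For the uniqueness of the support, let $x^{m}=\prod_{a\in\mathcal{S}(M)}a^{x(a)}$ be any factorization of $x^m$ into strong atoms; applying $\tau$, using $\tau(a_p)=p^{n_p}$, and comparing the exponent of each prime divisor $r$ on both sides gives $m\,\mathsf{v}_r(\tau(x))=n_r\,x(a_r)$. Hence $x(a_r)>0$ exactly when $r\in P_x$, so $\{a\in\mathcal{S}(M):x(a)>0\}=\{a_p:p\in P_x\}$; this set depends only on $x$ and in particular is the same for every choice of $m(x)$ (a fortiori for the minimal one), which is the claimed uniqueness.

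The main obstacle is the converse step isolated above: proving that a strong atom is ``supported'' at a single prime divisor and hence identifying $\mathcal{S}(M)$ with the set of prime divisors of $M$. Once that bijection is in hand, the existence statement and the uniqueness of the support are routine exponent manipulations in the free monoid $F$; the only points demanding a little care are the purely formal uses of condition~(1) of a divisor theory — the structure of $D_p$, and the fact that applying $\tau$ to a factorization in $M$ returns the product of the $\tau$-images of the atoms involved.
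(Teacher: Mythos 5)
The paper does not prove this theorem itself---it cites \cite[Theorem~1]{KZ91} and explicitly defers to the divisor-theoretic proofs in \cite{KZ91,gA20}---and your argument is a correct reconstruction of exactly that route: the dictionary you establish, identifying $\mathcal{S}(M)$ with the elements $a_p$ satisfying $\tau(a_p)=p^{n_p}$ where $n_p$ is the order of the class of the prime divisor $p$, is precisely the content of Corollary~\ref{cor:consequences of the DT} in the paper. All steps check out, including the key converse that a strong atom must be supported at a single prime divisor and the exponent comparison giving uniqueness of the support.
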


The following corollary is an important consequence of the Decay Theorem, and it will be helpful later.

\begin{cor} \label{cor:submonoids generated by strong atoms in a KM are UFMs}
	Let $M$ be a Krull monoid with torsion class group. If $A$ is a subset of $\mathcal{S}(M)$, then the submonoid $\langle A \rangle$ of $M$ is a UFM with set of primes~$A$.
\end{cor}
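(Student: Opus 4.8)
The plan is to show two things: that every nonunit of $\langle A \rangle$ factors uniquely into elements of $A$, and that $A$ is precisely the set of primes of $\langle A \rangle$. I will take the first claim as the substantive part. Let $b \in \langle A \rangle$ be a nonunit, so $b = \prod_{a \in A} a^{n_a}$ for some nonnegative integers $n_a$, almost all zero; I want to show this expression is unique, i.e. the $n_a$ are determined by $b$. Since each $a \in A$ is a strong atom of $M$, and $M$ is a Krull monoid with torsion class group, each $a$ is a primary atom of $M$, hence in particular each $a$ is an atom of $\langle A \rangle$. So any factorization of $b$ in $\langle A \rangle$ is a factorization of $b$ into atoms of $M$ lying in $A$. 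Thus it suffices to rule out a second factorization $b = \prod_{a \in A} a^{m_a}$ with $(m_a) \neq (n_a)$ inside $M$.

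The key step is to reduce the comparison of two such factorizations to a single strong atom and then invoke the defining property of strong atoms. Suppose $\prod_a a^{n_a} = \prod_a a^{m_a}$ with the two exponent families distinct; after cancelling common factors (using cancellativity of $M$) we may assume that for some $a_0 \in A$ we have $n_{a_0} > 0$ while $m_{a_0} = 0$. Pick an exponent $N \geq n_{a_0}$ large enough (for instance $N = \sum_a n_a$) and multiply both sides by suitable powers of the $a$'s so that $a_0^N$ divides the left-hand side; then $a_0^N$ divides a product of the form $\prod_{a \neq a_0} a^{k_a}$ in $M$. Now I use the strong-atom hypothesis: each $a$ with $a \neq a_0$ is also a strong atom, and $a_0$ is prime in the UFM $\langle\, a \,\rangle$-type reasoning does not directly apply, so instead I argue as follows. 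Because $a_0$ is a \emph{primary} atom of $M$ (the Krull-with-torsion-class-group translation of ``strong atom''), $a_0 \mid_M a^k$ for some $k$ forces $a_0$ to be an associate of a power of $a$; since $M$ is reduced and $a, a_0$ are atoms, this gives $a = a_0$, contradicting $a \neq a_0$. Combining this across the product yields a contradiction, so no second factorization exists. The one point that needs care — and which I expect to be the main obstacle — is the bookkeeping that turns ``$a_0^{n_{a_0}}$ appears on one side but not the other'' into a genuine divisibility $a_0 \mid_M \prod_{a \neq a_0} a^{k_a}$; this is where cancellativity and the fact that strong atoms of $M$ remain atoms (indeed primary atoms) of $M$ must be used carefully, and one should double-check that ``associate with a power of $a_0$'' combined with atomicity really collapses to equality in the reduced monoid.

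For the second claim, once unique factorization in $\langle A \rangle$ is established it is standard that the atoms of a UFM are exactly its primes, so $\mathcal{P}(\langle A \rangle) = \mathcal{A}(\langle A \rangle)$. It remains to identify $\mathcal{A}(\langle A \rangle)$ with $A$. The inclusion $A \subseteq \mathcal{A}(\langle A \rangle)$ holds because each $a \in A$ is an atom of $M$ and $\langle A \rangle \subseteq M$, so a nontrivial factorization of $a$ inside $\langle A \rangle$ would be one inside $M$. Conversely, any atom of $\langle A \rangle$ is a nonunit element of $\langle A \rangle$, hence by the (now established) unique factorization it is a product of elements of $A$, and being an atom it must be a single element of $A$. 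Hence $\mathcal{A}(\langle A \rangle) = A = \mathcal{P}(\langle A \rangle)$, which is the desired conclusion. I would present the first paragraph's reduction and the primary-atom argument as the core of the write-up and treat the UFM-implies-AP and atom-identification steps as brief remarks.
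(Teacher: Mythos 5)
Your route is genuinely different from the paper's and, once one step is tightened, it works. The paper does not argue via divisibility at all: it takes two factorizations $\prod_{a\in A}a^{x(a)}=\prod_{a\in A}a^{y(a)}$, cancels the common part to obtain an equality $\prod_{a\in A_1}a^{x'(a)}=\prod_{a\in A_2}a^{y'(a)}$ with $A_1$ and $A_2$ disjoint, and then invokes the uniqueness clause of the Decay Theorem (the set of strong atoms occurring in a decay representation of a fixed element is uniquely determined) to force $A_1=A_2$, hence both empty. That is a three-line argument once the Decay Theorem is available. Your argument replaces the Decay Theorem by properties of the individual elements of $A$ --- the equivalence between strong atoms and primary atoms in a Krull monoid with torsion class group, which the paper records from the literature, together with the definition of a strong atom. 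This is a legitimate and somewhat more elementary trade, but it puts the burden on exactly the divisibility bookkeeping you flagged.

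Two points in that bookkeeping need attention. First, the step you label ``combining this across the product'' is precisely where the primariness of $a_0$ must be used, and it is currently missing: from $a_0\mid_M\prod_{a\neq a_0}a^{k_a}$ you must extract $a_0\mid_M a^{k}$ for a \emph{single} $a\neq a_0$, which follows by repeatedly applying the primary property ($a_0\mid_M bc$ and $a_0\nmid_M b$ imply $a_0\mid_M c^{n}$ for some $n$) to peel off one strong atom at a time. Second, you attribute the implication ``$a_0\mid_M a^{k}$ forces $a_0$ to be associate to a power of $a$'' to the primariness of $a_0$; that is the wrong hypothesis --- it is the strong-atom property of $a$ (any nonunit dividing a power of $a$ is associate to a power of $a$) that yields this, after which the facts that $a_0$ is an atom and $M$ is reduced collapse the conclusion to $a_0=a$, the desired contradiction. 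Finally, the ``choose $N$ large and multiply both sides'' manoeuvre is unnecessary: after cancelling the common part, $a_0$ already divides the right-hand side, which is a product of strong atoms all distinct from $a_0$ (and that product cannot be empty, since otherwise $a_0$ would be a unit). Your closing identification $\mathcal{A}(\langle A\rangle)=A=\mathcal{P}(\langle A\rangle)$ is fine and agrees with the paper.
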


\begin{proof}
	If $A$ is empty, then $\langle A \rangle$ is the trivial monoid, and there is nothing to prove. Assume, on the other hand, that $A$ is a nonempty subset of $\mathcal{S}(M)$, and set $S := \langle A \rangle$. It is clear that $\mathcal{A}(S) = A$. Suppose now that the equality $\prod_{a \in A} a^{x(a)} = \prod_{a \in A} a^{y(a)}$ holds, where $x(a), y(a) \in \nn_0$ for all $a \in A$ and the set $\{a \in A : x(a) > 0 \text{ or } y(a) >0 \}$ is finite. For each $a \in A$, set $z(a) := \min\{x(a), y(a)\}$, and then set
	\[
		x'(a) := x(a) - z(a) \quad \text{ and } \quad y'(a) := y(a) - z(a).
	\]
	After canceling the element $\prod_{a \in A} a^{z(a)}$, we obtain the equality $\prod_{a \in A_1} a^{x'(a)} = \prod_{a \in A_2} a^{y'(a)}$, where $A_1 := \{a \in A : x'(a) > 0\}$ and $A_2 := \{a \in A : y'(a) > 0\}$. Observe that $A_1$ and $A_2$ are disjoint. Now the uniqueness in the last statement of the Decay Theorem ensures that $A_1 = A_2$ and, therefore, both sets $A_1$ and $A_2$ must be empty. As a result, $x(a) = z(a) = y(a)$ for all $a \in A$, and we can conclude that every element of $S$ has a unique factorization. Hence $S$ is a UFM, and so the set of primes of $S$ is precisely its set of atoms, namely, $A$.
\end{proof}

With notation as in the Decay Theorem, the element $x \in M^\bullet$ can decay in different ways if we do not assume that $m(x)$ is minimally chosen, that is, there are different choices for the set of exponents $\{m(x), x(a) : a \in \mathcal{S}(M)\}$. However, the sets of exponents of any two decay representations of $x$ as in \eqref{eq:DT representation} are connected as the following lemma indicates.

\begin{lem} \label{lem:exponent proportionality in DM representations} 
	Let $M$ be a Krull monoid with torsion class group. For $x \in M^\bullet$, consider the following two representations as in~\eqref{eq:DT representation}:
	\begin{equation} \label{eq:two DT representations}
		x^{m(x)} = \prod_{a \in \mathcal{S}(M)} a^{x_m(a)} \quad \text{ and } \quad	x^{n(x)} = \prod_{a \in \mathcal{S}(M)} a^{x_n(a)}.
	\end{equation}
	Then the equality
	\begin{equation} \label{eq:proportionality of the DM representations}
		\frac{x_m(a)}{m(x)} = \frac{x_n(a)}{n(x)}
	\end{equation}
	holds for every $a \in \mathcal{S}(M)$. 
\end{lem}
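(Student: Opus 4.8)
The plan is to combine the two given decay representations of $x$ in~\eqref{eq:two DT representations} into a single equality by raising each to an appropriate power, and then invoke the uniqueness of factorization in the relevant submonoid generated by strong atoms. First I would raise the first equality in~\eqref{eq:two DT representations} to the power $n(x)$ and the second to the power $m(x)$, obtaining
\[
	x^{m(x)n(x)} = \prod_{a \in \mathcal{S}(M)} a^{\,n(x)\,x_m(a)} \quad \text{ and } \quad x^{m(x)n(x)} = \prod_{a \in \mathcal{S}(M)} a^{\,m(x)\,x_n(a)}.
\]
Since the left-hand sides coincide, we deduce the equality
\[
	\prod_{a \in \mathcal{S}(M)} a^{\,n(x)\,x_m(a)} = \prod_{a \in \mathcal{S}(M)} a^{\,m(x)\,x_n(a)}
\]
in $M$. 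Both products are taken over the subset $A := \{a \in \mathcal{S}(M) : x_m(a) > 0 \text{ or } x_n(a) > 0\}$ of $\mathcal{S}(M)$, which is finite, so both sides lie in the submonoid $\langle A \rangle$ of $M$.

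Next I would apply Corollary~\ref{cor:submonoids generated by strong atoms in a KM are UFMs}: since $A \subseteq \mathcal{S}(M)$, the submonoid $\langle A \rangle$ is a UFM with set of primes (equivalently, atoms) exactly $A$. Hence any two factorizations into the atoms of $\langle A \rangle$ must have matching exponents, which forces
\[
	n(x)\,x_m(a) = m(x)\,x_n(a) \quad \text{ for every } a \in \mathcal{S}(M)
\]
(the equality being trivially true for $a \notin A$, where both exponents vanish). Dividing by $m(x) n(x) \in \nn$ yields precisely~\eqref{eq:proportionality of the DM representations}, completing the argument.

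The argument is short and I do not anticipate a serious obstacle; the only point requiring a little care is the bookkeeping of index sets, namely checking that both sides of the combined equality genuinely lie in $\langle A \rangle$ for a \emph{finite} $A$ so that Corollary~\ref{cor:submonoids generated by strong atoms in a KM are UFMs} applies, and that comparing exponents in the UFM $\langle A \rangle$ is legitimate even though $\langle A \rangle$ may have non-unique factorization as a submonoid of the ambient $M$ in some other sense — but the corollary guarantees $\langle A \rangle$ itself is a genuine UFM on the atom set $A$, so comparison of exponents is valid. An alternative route, bypassing the corollary, would be to note that both representations in~\eqref{eq:two DT representations} are decay representations of $x$, pass to a minimal one $x^{\ell(x)} = \prod_{a} a^{x_\ell(a)}$, use the uniqueness of the support set $\{a : x_\ell(a) > 0\}$ from the Decay Theorem, and show each $m(x)$ (resp. $n(x)$) is a multiple of $\ell(x)$ with the exponents scaling accordingly; but routing through Corollary~\ref{cor:submonoids generated by strong atoms in a KM are UFMs} is cleaner, so that is the approach I would take.
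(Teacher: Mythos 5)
Your argument is correct and follows essentially the same route as the paper: raise the two decay representations to the powers $n(x)$ and $m(x)$ respectively, equate, and compare exponents via Corollary~\ref{cor:submonoids generated by strong atoms in a KM are UFMs} (the paper applies it to all of $\langle \mathcal{S}(M)\rangle$ rather than the finite subset $\langle A\rangle$, but that is an immaterial difference).
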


\begin{proof}
	Let $S$ be the submonoid of $M$ generated by the set of strong atoms of $M$; that is, $S := \langle \mathcal{S}(M) \rangle$. It follows from Corollary~\ref{cor:submonoids generated by strong atoms in a KM are UFMs} that $S$ is a UFM with set of primes $\mathcal{S}(M)$. From the representations in~\eqref{eq:two DT representations}, we obtain the following:
	\begin{equation} \label{eq:aux product of strong atoms}
		\prod_{a \in \mathcal{S}(M)} a^{n(x) x_m(a)} = x^{m(x) n(x)} = \prod_{a \in \mathcal{S}(M)} a^{m(x) x_n(a)}.
	\end{equation}
	Since $S$ is a UFM with set of primes $\mathcal{S}(M)$ and the equalities~\eqref{eq:aux product of strong atoms} take place in $S$, we see that $n(x) x_m(a) = m(x) x_n(a)$ for all $a \in \mathcal{S}(M)$, which yields the equalities in~\eqref{eq:proportionality of the DM representations}.
\end{proof}

For the rest of this section, assume that the Krull monoid $M$ has torsion class group. For each $a \in \mathcal{S}(M)$, we now define a function $\lambda_a \colon M \to \qq_{\ge 0}$ as follows: for each $u \in \uu(M)$ set $\lambda_a(u) := 0$, and for each $x \in M \setminus \uu(M)$ set
\[
	\lambda_a(x) := \frac{x(a)}{m(x)},
\]
where $m(x)$ and $x(a)$ are the exponents of $x$ and $a$ in \eqref{eq:DT representation}, provided that the exponent $m(x)$ has been chosen minimally. By virtue of Lemma~\ref{lem:exponent proportionality in DM representations}, the equality $\lambda_a(x) = \frac{x(a)}{m(x)}$ holds for any of the representations in~\eqref{eq:DT representation}, even when $m(x)$ is not minimally chosen. Observe that for any $a,b \in \mathcal{S}(M)$, the equality $\lambda_a(b) = 1$ holds if $a=b$ while $\lambda_a(b) = 0$ otherwise; that is, the values $\lambda_a(b)$ are those of the Kronecker's delta on the set $\mathcal{S}(M)$. Moreover, for each $x \in M^\bullet$, we observe that
\begin{equation} \label{eq:support in terms of the DT decomposition}
	\{a \in \mathcal{S}(M) : \lambda_a(x) > 0\}
\end{equation} 
is a finite set, which immediately follows from the last statement of the Decay Theorem.

Now, we can use the functions $\lambda_a$ (for all $a \in \mathcal{S}(M)$) to define another monoid homomorphism on~$M$. We define $\delta \colon M \to \qq_{\ge 0}$ via the assignment
\[
	\delta(x) := \sum_{a \in \mathcal{S}(M)} \lambda_a(x)
\]
for each $x \in M$. Note that $\delta$ is a well-defined function because, for each $x \in M$, the set in~\eqref{eq:support in terms of the DT decomposition} is finite. Observe that $\delta(a) = 1$ for any $a \in \mathcal{S}(M)$. Moreover, we can see that when $x \in \langle \mathcal{S}(M) \rangle$ the magnitude $\delta(x)$ equals the length of the unique factorization of $x$ in the monoid $\langle \mathcal{S}(M) \rangle$ (see Corollary~\ref{cor:submonoids generated by strong atoms in a KM are UFMs}). Roughly speaking, for each $x \in M^\bullet$, the magnitude $\delta(x)$ equals the total number of strong atoms \emph{per capita} into which $x^{m(x)}$ decays: we call $\delta(x)$ the \emph{decay rate} of $x$. 

\begin{lem}
	Let $M$ be a Krull monoid with torsion class group. The following statements hold.
	\begin{enumerate}
		\item For each $a \in \mathcal{S}(M)$, the function $\lambda_a \colon M \to \qq_{\ge 0}$ is a monoid homomorphism:
		\begin{equation} \label{eq:lambda additive property}
			\lambda_a(xy) = \lambda_a(x) + \lambda_a(y) \quad \text{for all} \quad x,y \in M.
		\end{equation}
		
		\item The function $\delta \colon M \to \qq_{\ge 0}$ is a monoid homomorphism:
		\begin{equation} \label{eq:decay rate property}
			\delta(xy) = \delta(x) + \delta(y) \quad \text{for all} \quad x,y \in M.
		\end{equation}
	\end{enumerate}
\end{lem}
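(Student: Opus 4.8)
The plan is to derive part~(1) directly from the Decay Theorem and Lemma~\ref{lem:exponent proportionality in DM representations}, and then obtain part~(2) by summing the identity~\eqref{eq:lambda additive property} over $a \in \mathcal{S}(M)$.

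For part~(1), I would fix $a \in \mathcal{S}(M)$ and $x,y \in M$. Since $M$ is reduced, its only unit is the identity, so when $x = 1$ or $y = 1$ the desired equality is immediate from $\lambda_a(1) = 0$; thus I may assume $x,y \in M^\bullet$, whence $xy \in M^\bullet$. Applying the Decay Theorem to $x$ and to $y$ gives $m(x), m(y) \in \nn$ together with decay representations
\[
    x^{m(x)} = \prod_{b \in \mathcal{S}(M)} b^{x(b)} \qquad \text{and} \qquad y^{m(y)} = \prod_{b \in \mathcal{S}(M)} b^{y(b)}.
\]
Raising the first to the $m(y)$-th power and the second to the $m(x)$-th power and multiplying produces
\[
    (xy)^{m(x)m(y)} = \prod_{b \in \mathcal{S}(M)} b^{\,m(y)x(b) + m(x)y(b)},
\]
a representation of $xy$ of the form~\eqref{eq:DT representation} with exponent $m(x)m(y)$ (possibly not minimal). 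The key point is then Lemma~\ref{lem:exponent proportionality in DM representations}: it guarantees that $\lambda_a(xy)$ may be read off from \emph{any} such representation, giving
\[
    \lambda_a(xy) = \frac{m(y)x(a) + m(x)y(a)}{m(x)m(y)} = \frac{x(a)}{m(x)} + \frac{y(a)}{m(y)} = \lambda_a(x) + \lambda_a(y),
\]
which is~\eqref{eq:lambda additive property}.

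For part~(2), I would fix $x,y \in M$ and recall from~\eqref{eq:support in terms of the DT decomposition} that the sets $\{a \in \mathcal{S}(M) : \lambda_a(x) > 0\}$ and $\{a \in \mathcal{S}(M) : \lambda_a(y) > 0\}$ are both finite; since $\lambda_a$ is nonnegative, part~(1) forces $\{a \in \mathcal{S}(M) : \lambda_a(xy) > 0\}$ to sit inside their finite union. Consequently all the sums below are finite and split term by term:
\[
    \delta(xy) = \sum_{a \in \mathcal{S}(M)} \lambda_a(xy) = \sum_{a \in \mathcal{S}(M)} \big(\lambda_a(x) + \lambda_a(y)\big) = \sum_{a \in \mathcal{S}(M)} \lambda_a(x) + \sum_{a \in \mathcal{S}(M)} \lambda_a(y) = \delta(x) + \delta(y),
\]
establishing~\eqref{eq:decay rate property}.

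I do not anticipate a genuine obstacle: the one place that needs care is that the product of the two decay representations need not carry the minimal exponent, which is exactly the role played by Lemma~\ref{lem:exponent proportionality in DM representations}; the finiteness remarks immediately following the Decay Theorem dispose of the only convergence concern, which arises in part~(2).
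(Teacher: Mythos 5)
Your argument is correct and follows essentially the same route as the paper: both proofs multiply the two decay representations to get a (possibly non-minimal) representation $(xy)^{m(x)m(y)} = \prod_{b} b^{\,m(y)x(b)+m(x)y(b)}$ and then invoke Lemma~\ref{lem:exponent proportionality in DM representations} to read off $\lambda_a(xy)$ from it, and both deduce part~(2) by summing over $a \in \mathcal{S}(M)$. Your treatment of the finiteness of supports in part~(2) is slightly more explicit than the paper's one-line deduction, but there is no substantive difference.
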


\begin{proof}
	(1) Fix $a \in \mathcal{S}(M)$, and let us argue that $\delta_a$ is a monoid homomorphism. To do so, fix $x,y \in M$, and let $x^{m(x)} = \prod_{a \in \mathcal{S}(M)} a^{x(a)}$ and $y^{m(y)} = \prod_{a \in \mathcal{S}(M)} a^{y(a)}$ be the respective representations of $x$ and $y$ given by the Decay Theorem (assuming the minimality of $m(x)$ and $m(y)$). From these two representations, we obtain not only that $\lambda_a(x) = \frac{x(a)}{m(x)}$ and $\lambda_a(y) = \frac{y(a)}{m(y)}$, but also the following:
	\[
		(xy)^{m(x)m(y)} = \prod_{a \in \mathcal{S}(M)} a^{x(a) m(y) + y(a)m(x)}.
	\]
	It follows from Lemma~\ref{lem:exponent proportionality in DM representations}, that the value of the function $\lambda_a$ at $xy$ does not depend on the representation of $xy$ given by the Decay Theorem. Therefore
	\[
		\lambda_a(xy) = \frac{x(a) m(y) + y(a) m(x)}{m(x)m(y)} = \frac{x(a)}{m(x)} + \frac{y(a)}{m(y)} = \lambda_a(x) + \lambda_a(y).
	\]
	
	(2) This is an immediate consequence of the definition of $\delta$ and the fact that $\lambda_a$ is a monoid homomorphism for every $a \in \mathcal{S}(M)$.
\end{proof}
\smallskip

We have seen in Corollary~\ref{cor:submonoids generated by strong atoms in a KM are UFMs} that the submonoid generated by the strong atoms of $M$ is a UFM. Furthermore, $M$ itself is a UFM if and only if $\mathcal{S}(M) = \mathcal{P}(M)$ (the fact that $M$ is a Krull monoid is required here: see Example~\ref{ex:strong atoms of rank-1 monoids}). The quotient $M/\langle \mathcal{S}(M) \rangle$ is an abelian group, which is called the \emph{inner class group} of $M$ (see \cite[Section~3]{GKL19} for more detail). For every element $x \in M^\bullet$, the order of the class of $x$ in $M/\langle \mathcal{S}(M) \rangle$ equals $m(x)$. Comparing the unique representations given on one hand by the Decay Theorem and on the other hand by the basis of the free commutative monoid $F$ in a divisor theory $M \to F$ of $M$, we obtain the following corollary.

\begin{cor}  \cite[Theorem~1]{gA20} \label{cor:consequences of the DT}
	Let $M$ be a Krull monoid with torsion class group, and let $M \to F$ be a divisor theory of $M$. Then the following statements hold.
	\begin{enumerate}
		\item $a \in \mathcal{S}(M)$ if and only if $a = p^{k(p)}$ for some prime divisor $p \in F$, where $k(p)$ is the order of the class of $p$ in $\emph{Cl}(M)$.
		\smallskip
		
		\item $\mathcal{S}(M) = \{ a \in \mathcal{A}(M) : a \text{ is primary} \}$.
	\end{enumerate}
\end{cor}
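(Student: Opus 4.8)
The plan is to establish part (1) first and then read off part (2) from it. Throughout I would work inside the divisor theory $\tau\colon M\hookrightarrow F$, identifying $M$ with $\tau(M)$, so that every $x\in M$ has a unique expression $x=\prod_p p^{v_p(x)}$ over the prime divisors $p$ of $F$, and $x$ lies in $M$ precisely when its class $\sum_p v_p(x)[p]$ vanishes in $\text{Cl}(M)=F/\tau(M)$ (written additively). Since $\text{Cl}(M)$ is torsion, each prime divisor $p$ has finite order $k(p)=\text{ord}([p])$, so $p^{k(p)}\in M$; these are the candidate strong atoms. For the ``if'' direction of part (1) I would check directly that $a:=p^{k(p)}$ is a strong atom: it is an atom because a factorization $a=bc$ in $M$ forces $b=p^i$ and $c=p^j$ with $i+j=k(p)$, and $b\in M$ (equivalently $k(p)\mid i$) together with $0\le i\le k(p)$ leaves only $i\in\{0,k(p)\}$, i.e.\ $b$ or $c$ is a unit; and if $b\in M^\bullet$ divides $a^n=p^{nk(p)}$ in $M$, then $b=p^i$ in $F$ with $k(p)\mid i$ and $i\ge 1$, so $b=(p^{k(p)})^{i/k(p)}$ is a power of $a$ (as $M$ is reduced), which is exactly the strong-atom condition.

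The main work, and the step I expect to be the crux, is the ``only if'' direction of part (1). Given $a\in\mathcal{S}(M)$, pick a prime divisor $p$ with $v_p(a)\ge 1$. The key point is that $p^{k(p)}\mid_M a^n$ for all sufficiently large $n$: divisibility in $F$ holds once $nv_p(a)\ge k(p)$, and the quotient $a^n p^{-k(p)}$ \emph{automatically} lies in $M$ because its class equals $n\big(\sum_q v_q(a)[q]\big)-k(p)[p]=n\cdot 0-0=0$, using $a\in M$ and $k(p)[p]=0$. Since $a$ is a strong atom and $p^{k(p)}$ is a nonunit, $p^{k(p)}$ must equal $a^\ell$ in $F$ for some $\ell\ge 1$; comparing prime-divisor exponents then forces $v_q(a)=0$ for every $q\ne p$, so $a=p^{v_p(a)}$, and finally $k(p)\mid v_p(a)$ (because $a\in M$) together with $a$ being an atom gives $v_p(a)=k(p)$, i.e.\ $a=p^{k(p)}$. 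This ``vanishing-class'' computation is really the only subtle ingredient, and it recurs in part (2).

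For part (2), the inclusion $\mathcal{S}(M)\subseteq\{a\in\mathcal{A}(M):a\text{ primary}\}$ follows from part (1): writing a strong atom as $a=p^{k(p)}$, if $a\mid_M bc$ and $a\nmid_M b$, then $v_p(c)\ge 1$ (otherwise $v_p(b)\ge k(p)$ and the same vanishing-class argument would give $a\mid_M b$), whence $v_p(c^n)\ge k(p)$ for large $n$ and $a\mid_M c^n$. For the reverse inclusion I would argue the contrapositive: an atom $a$ that is not strong must, by part (1) and the observation that an atom of the form $p^m$ necessarily has $m=k(p)$, involve at least two distinct prime divisors $p_1,p_2$; then I would produce explicit witnesses showing $a$ is not primary, namely $b:=p_2^{k(p_2)\lceil v_{p_2}(a)/k(p_2)\rceil}$ and $c:=\prod_{q\ne p_2}q^{k(q)\lceil v_q(a)/k(q)\rceil}$, both of which lie in $M$ since each of their prime-divisor exponents is a multiple of the corresponding $k(q)$. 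One then checks $a\mid_M bc$ (the exponents of $c$ dominate those of $a$ away from $p_2$, and $b$ supplies enough $p_2$), while $a\nmid_M b$ because $a$ uses $p_1\ne p_2$, and $a\nmid_M c^n$ for every $n$ because $v_{p_2}(c)=0<v_{p_2}(a)$. Alternatively, the equivalence ``strong atom $\iff$ primary atom'' may simply be quoted from \cite[Proposition~7.1.5]{GH06}, as cited above. The only genuine obstacle is the vanishing-class divisibility trick in part (1); once that is in place, every other step is bookkeeping with exponents in $F$.
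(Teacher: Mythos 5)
Your proof is correct. Note, however, that the paper does not actually prove this corollary: it cites \cite[Theorem~1]{gA20} and offers only the one-line heuristic that the statement follows by ``comparing the unique representations'' given by the Decay Theorem and by the basis of $F$. Your argument takes a genuinely self-contained route that never invokes the Decay Theorem: everything is reduced to the single structural fact that an element of $F$ lies in $\tau(M)$ exactly when its class in $\mathrm{Cl}(M)=F/\tau(M)$ vanishes (which does follow directly from condition~(1) in the paper's definition of a divisor theory, by cancellation in $F$). From that ``vanishing-class'' criterion, your verification that $p^{k(p)}$ is a strong atom, your derivation that every strong atom has this form (via $p^{k(p)}\mid_M a^n$ for large $n$), and your two inclusions in part~(2) --- including the explicit witnesses $b$ and $c$ showing a non-strong atom is not primary --- are all sound; the only points worth making explicit are that $M$ reduced makes $\tau$ injective and ``associate'' mean ``equal,'' and that the product defining $c$ is finite because $\mathsf{v}_q(a)=0$ for almost all $q$. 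What your approach buys is a complete first-principles proof where the paper has only a citation; what the paper's intended route buys is brevity, since it leans on the Decay Theorem (itself quoted from \cite{KZ91}) and on \cite[Proposition~7.1.5]{GH06} for the equivalence of strong and primary atoms, an equivalence you also correctly note could be quoted in place of your explicit contrapositive argument.
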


For the unique decay of elements into special atoms, see~\cite[Theorem~4]{uK90}, \cite[Theorem~1]{KZ91}, \cite[Theorem~3.9]{CK05}, \cite[Theorem~4]{CHK02}, \cite[Theorem~3.1]{CK12}. A proof of the Decay Theorem using divisor theory can be found in \cite[Theorem~1]{KZ91} and~\cite[Theorem~1]{gA20}, while a proof of the Decay Theorem using extraction theory can be found in~\cite[Corollary~8]{GKL23}. In~\cite[Theorem~1]{gA20}, it is shown that, for a Krull monoid $M$, part~(2) of Corollary~\ref{cor:consequences of the DT} is equivalent to the fact that $M$ has torsion class group.
\smallskip

We have seen in Example~\ref{ex:Hilbert monoid is Krull} that the Hilbert monoid is a Krull monoid with class group $\zz/2\zz$. In the following example, we use the Decay Theorem to shed some light upon the (strong) atoms of Krull monoids with class group $\zz/2\zz$.

\begin{ex} \label{ex:HF KM}
	Let $M$ be a Krull monoid such that $\text{Cl}(M) \cong \zz/2\zz$, and let $\tau \colon M \to F$ be a divisor theory of $M$. Since $M$ is reduced, the homomorphism $\tau$ is injective, and so we can identify $M$ with its isomorphic copy $\tau(M)$. As $M$ is a submonoid of a free commutative monoid, $M$ is torsion-free. It follows from \cite[Proposition~2]{KZ91} that $M$ is an HFM (but not a UFM). As~$M$ is a Krull monoid with torsion class group, every $x \in M^\bullet$ has a unique representation as in~\eqref{eq:DT representation}. Since $m(x)$ divides the order of $\text{Cl}(M)$, either $m(x) = 1$ or $m(x) = 2$.
	
	For $a \in \mathcal{S}(M)$, the fact that $\text{Cl}(M)$ has order $2$, in tandem with Corollary~\ref{cor:consequences of the DT}, guarantees that either $a \in \mathcal{P}(F)$ or $a = p^2$ for some $p \in \mathcal{P}(F) \setminus M$. Conversely, it is clear that
	\[
		\mathcal{P}(F) \cap M \subseteq \mathcal{P}(M) \subseteq \mathcal{S}(M).
	\]
	In addition, suppose that $p \in \mathcal{P}(F) \setminus M$ and $p^2 \in M$. We claim that $p^2 \in \mathcal{S}(M)$. To check this, write $(p^2)^n = a_1 \cdots a_\ell$ for some $a_1, \dots, a_\ell \in \mathcal{A}(M)$. Since $F$ is free on $\mathcal{P}(F)$, for each $i \in \ldb 1,\ell \rdb$ the element $a_i$ is a power of $p$. Moreover, if $a_i = p^{2j+1}$ for some $i \in \ldb 1,\ell \rdb$ and $j \in \nn_0$, then the relation $p^{2j} \mid_F a_i$ would imply that $p^{2j} \mid_M a_i$, which is not possible because $p \notin M$. Thus, for each $i \in \ldb 1, \ell \rdb$, the element $a_i$ is an even power of $p$, and so the fact that $a_i \in \mathcal{A}(M)$ ensures that $a_i = p^2$. Hence $p^2 \in \mathcal{S}(M)$, as desired. Therefore
	\[
		\mathcal{S}(M) = (\mathcal{P}(F) \cap M) \cup \big\{p^2 : p \in \mathcal{P}(F) \setminus M \text{ and } p^2 \in M\big\}.
	\]
	
	For $a \in \mathcal{A}(M)$, it follows from~\eqref{eq:DT representation} and the minimality of $m(a)$ that $a \in \mathcal{S}(M)$ if and only if $m(a) = 1$. On the other hand, from the fact that $M$ is an HFM we obtain that if $m(a) = 2$, then $a^2 = a_1 a_2$ for some $a_1, a_2 \in \mathcal{S}(M)$, and so the fact that $M$ is torsion-free and the minimality of $m(a)$ guarantee that $a_1 \neq a_2$. Conversely, if $a^2 = a_1 a_2$ for distinct $a_1, a_2 \in \mathcal{S}(M)$, then $a$ is not a strong atom and, therefore, $m(a) = 2$. When $a^2 = a_1 a_2$  for distinct $a_1, a_2 \in \mathcal{S}(M)$, we say that the atom~$a$ \emph{splits} into strong atoms.
	
	Finally, for $x \in M^\bullet$, it is clear that the equality $m(x) = 1$ holds if and only if $x$ factors into strong atoms. As a consequence, when $m(x) = 2$ any factorization of $x$ in $M$ must have an atom that is not a strong atom, that is, an atom that splits into strong atoms.
\end{ex}

\bigskip
\section{Riemann Zeta Functions and Euler's Product Formula}
\label{sec:Zeta function for KM and Euler Product}

In order to generalize the Riemann zeta function to any Krull monoid, we need a measure of the elements of such a monoid (which in case of $\nn$ is given in a natural way by the magnitude of numbers); this is what we call a scale. For a Krull monoid $M$ with torsion class group, a \emph{scale} on $M$ is a monoid homomorphism $\sigma \colon M \to \rr^\times$ such that $\sigma(a) > 1$ for every $a \in \mathcal{S}(M)$. Let $\sigma \colon M \to \rr^\times$ be a scale on~$M$. It follows immediately that $\sigma(x) > 1$ for every nonunit element $x \in \langle \mathcal{S}(M) \rangle$. Moreover, as~$M$ is a Krull monoid with torsion class group, it follows from the Decay Theorem that if $x \in M$ is a nonunit element, then $x^{m(x)}$ is a nonunit element of $\langle \mathcal{S}(M) \rangle$, and so $\sigma(x) > 1$. 

As the following lemma indicates, any real-valued function on $\mathcal{S}(M)$ whose image is contained in $\rr_{>1}$ extends to a scale.

\begin{lem}
	Let $M$ be a Krull monoid with torsion class group, and let $f \colon \mathcal{S}(M) \to \rr_{> 1}$ be an arbitrary function. Then there exists a scale $\sigma \colon M \to \rr^\times$ extending $f$, that is, $\sigma(a) = f(a)$ for all $a \in \mathcal{S}(M)$.
\end{lem}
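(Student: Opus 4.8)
The plan is to build $\sigma$ explicitly out of the per-capita exponent functions $\lambda_a \colon M \to \qq_{\ge 0}$ introduced right after the Decay Theorem. For each $x \in M$ the set $\{a \in \mathcal{S}(M) : \lambda_a(x) > 0\}$ is finite (this is the content of~\eqref{eq:support in terms of the DT decomposition}, a consequence of the last statement of the Decay Theorem), and $f(a)^{\lambda_a(x)}$ makes sense as a positive real number since $f(a) > 0$ and $\lambda_a(x) \in \qq_{\ge 0}$. Hence the prescription
\[
	\sigma(x) := \prod_{a \in \mathcal{S}(M)} f(a)^{\lambda_a(x)}
\]
is actually a \emph{finite} product (all but finitely many factors equal $1$), so it defines an element of $\rr_{>0} \subseteq \rr^\times$. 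This is the candidate scale, and the first thing I would do is record that it is well defined precisely because of the finiteness just mentioned.

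Next I would verify the homomorphism property. Each $\lambda_a$ is a monoid homomorphism (the lemma preceding Corollary~\ref{cor:consequences of the DT}), so $\lambda_a(xy) = \lambda_a(x) + \lambda_a(y)$ for all $x, y \in M$ and all $a \in \mathcal{S}(M)$; together with the elementary identity $t^{r+s} = t^r t^s$, valid for $t \in \rr_{>0}$ and $r, s \in \qq_{\ge 0}$, this gives $\sigma(xy) = \sigma(x)\sigma(y)$, and $\sigma$ sends the identity of $M$ to the empty product $1$ (as $M$ is reduced, the identity is its only unit, and every $\lambda_a$ vanishes there). Then I would check the extension property: for $b \in \mathcal{S}(M)$ the values $\lambda_a(b)$ are those of the Kronecker delta on $\mathcal{S}(M)$, so the product collapses to the single factor $f(b)^{1} = f(b)$. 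In particular $\sigma(b) = f(b) > 1$ for every $b \in \mathcal{S}(M)$, which establishes simultaneously that $\sigma$ extends $f$ and that $\sigma$ satisfies the defining positivity requirement of a scale. Therefore $\sigma$ is a scale on $M$ with $\sigma|_{\mathcal{S}(M)} = f$, as desired.

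I do not expect a genuine obstacle here: the only delicate point is the well-definedness of the a priori infinite product, which rests entirely on the finiteness of the support of $\lambda_\bullet(x)$ guaranteed by the Decay Theorem, and once that is in place both the homomorphism property and the extension property are formal. An equivalent presentation, which I might prefer if it reads more cleanly, is to pass to logarithms and set $\sigma(x) := \exp\big( \sum_{a \in \mathcal{S}(M)} \lambda_a(x) \ln f(a) \big)$; here each summand is nonnegative (since $\ln f(a) > 0$) and only finitely many are nonzero, the homomorphism property of $\sigma$ becomes the additivity of $x \mapsto \sum_{a} \lambda_a(x)\ln f(a)$, and $\sigma(b) = \exp(\ln f(b)) = f(b)$ for $b \in \mathcal{S}(M)$.
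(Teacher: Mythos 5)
Your proposal is correct and is essentially the paper's own argument: the authors also define $\sigma(x) := \prod_{a \in \mathcal{S}(M)} f(a)^{\lambda_a(x)}$ (with $\sigma(u)=1$ on units), invoke the finiteness of the support guaranteed by the Decay Theorem for well-definedness, and deduce the homomorphism and extension properties from the additivity of $\lambda_a$ and the Kronecker-delta values $\lambda_a(b)$ for $b \in \mathcal{S}(M)$. No discrepancies to report.
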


\begin{proof}
	It follows from the Decay Theorem that every nonunit element $x \in M$ has a unique representation in the form $x^{m(x)} = \prod_{a \in \mathcal{S}(M)} a^{x(a)}$, and so we can set
	 \[
		 \sigma(x) := \prod_{a \in \mathcal{S}(M)} f(a)^{\lambda_a(x)}.
	 \]
	 In addition, we set $\sigma(u) := 1$ if $u$ is a unit. Thus, we obtain a map $\sigma \colon M \to \rr^\times$. From the fact that $\lambda_a(yz) = \lambda_a(y) + \lambda_a(z)$ for all $y,z \in M^\bullet$ (see \eqref{eq:lambda additive property}) we obtain that $\sigma$ is indeed a monoid homomorphism, and so a scale on $M$ because $\sigma(a) = f(a) > 1$ for all $a \in \mathcal{S}(M)$.
\end{proof}

The notion of a scale is the crucial ingredient we will use to provide a suitable generalization of the classical Riemann zeta function. However, in doing so certain infinite sums and products will show up, so we first need to handle some convergence issue. In order to consider the convergence of certain infinite sums and products that may not be indexed by~$\nn$, we will use the following convention. If~$I$ is an arbitrary set of indices and $\{b_i : i \in I\}$ is a set consisting of nonnegative real numbers, we set $\sum_{i \in I} b_i = 0$ if $I$ is empty. In addition, we say that the sum $\sum_{i \in I} b_i$ \emph{converges} provided that there exists $c \in \rr_{\ge 0}$ such that $\sum_{s \in S} b_s \le c$ for any finite subset $S$ of~$I$, in which case we call
\[
	\ell := \sup \Big\{ \sum_{s \in S} b_s : S \subseteq I \ \text{and} \ |S| < \infty \Big\}
\]
the \emph{limit} of $\sum_{i \in I} b_i$: we also say that $\sum_{i \in I} b_i$ \emph{converges to}~$\ell$ and write $\sum_{i \in I} b_i = \ell$. Now suppose that $b_i \ge 1$ for all $i \in I$. Then we set $\prod_{i \in I} b_i = 1$ if $I$ is empty and, in addition, we say that the product $\prod_{i \in I} b_i$ \emph{converges} provided that there exists $c' \in \rr_{>1}$ such that $\prod_{s \in S} b_s \le c'$ for every finite subset $S$ of $I$, in which case we call
\[
	\ell' := \sup \Big\{ \prod_{s \in S} b_s : S \subseteq I \ \text{and} \ |S| < \infty \Big\}
\]
the \emph{limit} of $\prod_{i \in I} b_i$, say that $\prod_{i \in I} b_i$ \emph{converges} to $\ell'$, and write  $\prod_{i \in I} b_i = \ell'$. If a sum/product does not converge according to the given definition, we say that it \emph{diverges}. Clearly, when $I$ is countable, the given definitions of convergence reduce to the corresponding classical notions of convergence for series and infinite products.

\begin{rem}
	The notion of convergence we have just proposed is convenient only for notational purposes in the sense that $\sum_{i \in I} b_i$ diverges provided that the set $\{i \in I : b_i > 0\}$ is uncountable. To argue this, let~$I$ be an uncountable set, and take $b_i \in \rr_{> 0}$ for every $i \in I$. After setting
	\[
		B_n := \Big\{ i \in I : b_i > \frac1n \Big\}
	\]
	for every $n \in \nn$, we see that the equality $I = \cup_{n \in \nn} B_n$ holds. Since $I$ is uncountable, we can take $k \in \nn$ such that $|B_k| = \infty$.  Thus, for each $\ell \in \nn$, we can take a subset $S$ of $B_k$ with $|S| = k \ell$, and observe that $\sum_{s \in S} s > \frac{|S|}k = \ell$. Thus, $\sum_{i \in I} b_i$ diverges.
\end{rem}

We are now in a position to generalize the classical Riemann zeta function from the multiplicative monoid $\nn$ to any Krull monoid with torsion class group. 

\begin{defn} \label{def:Rieamman zeta function}
	For a Krull monoid $M$ with torsion class group, which is endowed with a scale $\sigma \colon M \to \rr^\times$, we call
	\begin{equation} \label{eq:generalize Zeta functino}
		\zeta_M(\sigma) := \sum_{x \in \langle \mathcal{S}(M) \rangle}  \frac 1{\sigma(x)}
	\end{equation}
	the (\emph{generalized}) \emph{Riemann zeta function} of $M$ at $\sigma$.
\end{defn}
Depending on the scale $\sigma$, the summation may be a divergent sum. For any $s \in \rr_{> 1}$, observe that when we pick the scale $\sigma \colon \nn \to \rr^\times$ defined by the assignments $\sigma \colon n \mapsto n^s$, what we obtain in~\eqref{eq:generalize Zeta functino} is the classical (real) Riemann zeta function evaluated at~$s$.

Our next goal is to obtain a generalization of Euler's product formula for the Riemann zeta function of any Krull monoid with torsion class group. As an immediate consequence of the Decay Theorem, we can write
\[
	\zeta_M(\sigma) = \sum_{x \in M, m(x)=1} \frac{1}{\sigma(x)} = \sum_{x \in M/\sim} \frac{1}{\sigma(x)},
\]
where $\sim$ is the equivalence relation on $M$ defined by $x \sim y$ if $x^{m(x)} = y^{m(y)}$. Sometimes these expressions are easy to handle. Towards our next goal, we need the following lemma.

\begin{lem} \label{lem:Euler product for finite subsets}
	Let $M$ be a Krull monoid with torsion class group, and let $\sigma \colon M \to \rr^\times$ be a scale on~$M$. If $A$ is a finite subset of $\mathcal{S}(M)$, then
	\begin{equation} \label{eq:Euler product for finite subsets}
		\sum_{x \in \langle A \rangle} \frac{1}{\sigma(x)} = \prod_{a \in A} \frac{1}{1 - \frac{1}{\sigma(a)}}.
	\end{equation}
\end{lem}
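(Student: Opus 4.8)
The plan is to prove~\eqref{eq:Euler product for finite subsets} by induction on $|A|$, using the fact that $\langle A \rangle$ is a unique factorization monoid with set of primes~$A$ (by Corollary~\ref{cor:submonoids generated by strong atoms in a KM are UFMs}). The base case $A = \emptyset$ is immediate since both sides equal~$1$ under the conventions adopted for empty sums and products. For the inductive step, I would write $A = A' \sqcup \{a\}$ for some $a \in A$, so that every element $x \in \langle A \rangle$ has a \emph{unique} expression $x = a^k \cdot y$ with $k \in \nn_0$ and $y \in \langle A' \rangle$; this bijection $\langle A \rangle \leftrightarrow \nn_0 \times \langle A' \rangle$ is exactly the content of unique factorization in $\langle A \rangle$. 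Since $\sigma$ is a monoid homomorphism, $\frac{1}{\sigma(x)} = \frac{1}{\sigma(a)^k} \cdot \frac{1}{\sigma(y)}$, and the sum over $\langle A \rangle$ factors as a product of the sum over $k \in \nn_0$ of $\sigma(a)^{-k}$ times the sum over $y \in \langle A' \rangle$ of $\sigma(y)^{-1}$.

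The two resulting factors are then handled separately. The factor $\sum_{k=0}^\infty \sigma(a)^{-k}$ is a classical geometric series that converges to $\frac{1}{1 - \sigma(a)^{-1}}$ precisely because $\sigma(a) > 1$ (this is where the defining property of a scale enters); note that $A$ being finite guarantees there is no issue assembling finitely many such factors. The factor $\sum_{y \in \langle A' \rangle} \sigma(y)^{-1}$ equals $\prod_{a' \in A'} \frac{1}{1 - \sigma(a')^{-1}}$ by the inductive hypothesis. Multiplying gives $\prod_{a' \in A} \frac{1}{1 - \sigma(a')^{-1}}$, completing the induction.

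The one genuine subtlety—and the step I expect to require the most care—is justifying the rearrangement of the sum into a product of sums, i.e.\ the identity $\sum_{(k,y)} \sigma(a)^{-k}\sigma(y)^{-1} = \big(\sum_k \sigma(a)^{-k}\big)\big(\sum_y \sigma(y)^{-1}\big)$, together with the accompanying claim that the left side converges if and only if both factors on the right do. Because all terms are nonnegative, this is really a statement about suprema of finite partial sums (recall the paper's definition of convergence via finite subsets): any finite $S \subseteq \langle A \rangle$ projects into a finite rectangle $F_1 \times F_2 \subseteq \nn_0 \times \langle A' \rangle$, so $\sum_{x \in S}\sigma(x)^{-1} \le \big(\sum_{k \in F_1}\sigma(a)^{-k}\big)\big(\sum_{y \in F_2}\sigma(y)^{-1}\big)$, and conversely any product of finite partial sums is itself a finite partial sum over a rectangle inside $\langle A \rangle$. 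Taking suprema on both sides, and using that $\sup$ of a product of bounded nonnegative nets is the product of the sups, yields the claimed identity; one should also observe that the left-hand side of~\eqref{eq:Euler product for finite subsets} may well diverge, in which case the right-hand side (being a product of finitely many factors, each a supremum of the same partial sums restricted to powers of a single $a \in A$) diverges correspondingly, so the equality still holds in the extended sense. Alternatively, since $A$ is finite one can avoid the rectangle argument entirely by noting $\langle A \rangle$ has countable (indeed, polynomially bounded) cardinality and invoking the classical fact that nonnegative series may be rearranged freely; I would present whichever is cleaner given the conventions already fixed in the text.
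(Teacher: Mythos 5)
Your proof is correct and follows essentially the same route as the paper's: both rest on Corollary~\ref{cor:submonoids generated by strong atoms in a KM are UFMs} to identify $\langle A \rangle$ with $\nn_0^{|A|}$, factor the sum using the multiplicativity of $\sigma$, and evaluate each factor as a geometric series converging because $\sigma(a)>1$ (the paper performs the $|A|$-fold factorization in a single step rather than by induction on $|A|$, which is a cosmetic difference, and it passes over the rearrangement of the product of nonnegative series more quickly than your careful supremum-over-rectangles argument). One minor remark: your closing caveat about possible divergence of the left-hand side is superfluous, since for finite $A$ the right-hand side is a finite product of finite factors and the established equality forces the sum to converge.
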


\begin{proof}
	Set $k := |A|$, and let $a_1, \dots, a_k$ be the strong atoms in $A$. Because~$A$ consists of strong atoms, the submonoid $\langle A \rangle$ of $M$ is a UFM in light of Corollary~\ref{cor:submonoids generated by strong atoms in a KM are UFMs}. Therefore, for each $x \in \langle A \rangle$, there is a unique $k$-tuple $(n_1, \dots, n_k)$ of nonnegative integers such that $x = a_1^{n_1} \cdots a_k^{n_k}$ and, moreover, the assignments $x \mapsto (n_1, \dots, n_k)$ induces a bijection $\langle A \rangle \to \nn_0^k$. Thus, we can write the left-hand side of~\eqref{eq:Euler product for finite subsets} as follows:
	\[
		\sum_{x \in \langle A \rangle} \frac{1}{\sigma(x)} = \sum_{(n_1, \dots, n_k) \in \nn_0^k} \frac{1}{\sigma(a_1^{n_1} \cdots a_k^{n_k})} = \sum_{(n_1, \dots, n_k) \in \nn_0^k} \frac{1}{\sigma(a_1)^{n_1} \cdots \sigma(a_k)^{n_k}},
	\]
	which does not require $\sigma$ to be injective. On the other hand, since $\sigma$ is a scale on $M$, it follows that $\sigma(a) > 1$ for each $a \in A$. Hence we can write the right-hand side of~\eqref{eq:Euler product for finite subsets} as follows:
	\[
		\prod_{a \in A} \frac{1}{1 - \frac{1}{\sigma(a)}} = \prod_{i=1}^k \frac{1}{1 - \frac{1}{\sigma(a_i)}} = \prod_{i=1}^k \bigg( \sum_{n=0}^\infty \frac{1}{\sigma(a_i)^n} \bigg) = \sum_{(n_1, \dots, n_k) \in \nn_0^k} \frac{1}{\sigma(a_1)^{n_1} \cdots \sigma(a_k)^{n_k}}.
	\]
	As a consequence, the identity in~\eqref{eq:Euler product for finite subsets} holds.
\end{proof}
 
We are in a position to prove our main result.

\begin{theorem} \label{thm:Euler product for Krull monoids}
	Let $M$ be a Krull monoid with torsion class group, and let $\sigma \colon M \to \rr^\times$ be a scale on~$M$. Then the following statements hold.
	\begin{enumerate}
		\item The sum $\zeta_M(\sigma) = \sum_{x \in \langle \mathcal{S}(M) \rangle} \frac{1}{\sigma(x)}$ converges if and only if the sum $\sum_{a \in \mathcal{S}(M)} \frac{1}{\sigma(a)}$ does.
		\smallskip
		
		\item If $\zeta_M(\sigma) = \sum_{x \in \langle \mathcal{S}(M) \rangle} \frac{1}{\sigma(x)}$ converges, then the following generalization of Euler's product formula holds:
		\begin{equation} \label{eq:Euler product for Krull monoids}
			\sum_{x \in \langle \mathcal{S}(M) \rangle} \frac{1}{\sigma(x)} =  \prod_{a \in \mathcal{S}(M)} \frac{1}{1 - \frac{1}{\sigma(a)}}.
		\end{equation}
	\end{enumerate}
\end{theorem}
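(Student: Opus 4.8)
The plan is to reduce both statements to Lemma~\ref{lem:Euler product for finite subsets} via the observation that the sum over $\langle\mathcal{S}(M)\rangle$ can be reorganized as a supremum over the submonoids $\langle A\rangle$ with $A\subseteq\mathcal{S}(M)$ finite. Write $S:=\langle\mathcal{S}(M)\rangle$; by Corollary~\ref{cor:submonoids generated by strong atoms in a KM are UFMs} this is a UFM with set of primes $\mathcal{S}(M)$, so every $x\in S$ has a unique factorization into strong atoms, whose (finite) set of atoms I will call its support. Put $c_a:=1/\sigma(a)$, so that $c_a\in(0,1)$ for every $a\in\mathcal{S}(M)$ since $\sigma$ is a scale.

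The heart of the argument is the identity, valid as an equality in $[0,\infty]$,
\[
	\sup\Big\{\sum_{x\in T}\frac{1}{\sigma(x)}: T\subseteq S,\ |T|<\infty\Big\}=\sup\Big\{\prod_{a\in A}\frac{1}{1-c_a}: A\subseteq\mathcal{S}(M),\ |A|<\infty\Big\}.
\]
For $\le$: a finite $T\subseteq S$ lies in $\langle A_T\rangle$, where $A_T$ is the union of the supports of the elements of $T$ (a finite subset of $\mathcal{S}(M)$), so $\sum_{x\in T}1/\sigma(x)\le\sum_{x\in\langle A_T\rangle}1/\sigma(x)=\prod_{a\in A_T}(1-c_a)^{-1}$ by Lemma~\ref{lem:Euler product for finite subsets}. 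For $\ge$: for finite $A\subseteq\mathcal{S}(M)$ the finite number $\prod_{a\in A}(1-c_a)^{-1}=\sum_{x\in\langle A\rangle}1/\sigma(x)$ is the supremum of $\sum_{x\in T}1/\sigma(x)$ over finite $T\subseteq\langle A\rangle\subseteq S$, and each such partial sum appears on the left. With this identity in hand, part~(2) is immediate: if $\zeta_M(\sigma)$ converges then the common value above is finite, which by the definition of convergence of a product means $\prod_{a\in\mathcal{S}(M)}(1-c_a)^{-1}$ converges, and its limit is exactly that common value, namely $\zeta_M(\sigma)$.

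For part~(1) it remains to prove the classical comparison that $\prod_{a\in\mathcal{S}(M)}(1-c_a)^{-1}$ converges if and only if $\sum_{a\in\mathcal{S}(M)}c_a$ converges; combined with the identity above (which equates convergence of $\zeta_M(\sigma)$ with convergence of that product) this gives~(1). In one direction, expanding a finite product yields $\prod_{a\in A}(1-c_a)^{-1}\ge\prod_{a\in A}(1+c_a)\ge 1+\sum_{a\in A}c_a$, so convergence of the product bounds all finite partial sums of $\sum_a c_a$. In the other, convergence of $\sum_a c_a$ to some $t$ forces $A_0:=\{a\in\mathcal{S}(M):c_a>1/2\}$ to be finite; setting $C_0:=\prod_{a\in A_0}(1-c_a)^{-1}<\infty$ and using $-\log(1-c)\le c/(1-c)\le 2c$ for $0\le c\le 1/2$, every finite product splits as $\prod_{a\in A\cap A_0}(1-c_a)^{-1}\cdot\prod_{a\in A\setminus A_0}(1-c_a)^{-1}\le C_0\exp\!\big(2\sum_{a\in A\setminus A_0}c_a\big)\le C_0\exp(2t)$, so the product converges. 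The only delicate point is the bookkeeping with the nonstandard notion of convergence: one must keep track that $\zeta_M(\sigma)$, the finite products, and $\sum_a c_a$ are all suprema of partial quantities over finite index sets, so that the ``interchange of suprema'' in the key identity is merely associativity of $\sup$ over a union of finite families, and that each implication in~(1) is read off correctly from the definition of ``converges'' stated before Definition~\ref{def:Rieamman zeta function}. Once the identity is established, Lemma~\ref{lem:Euler product for finite subsets} does the real work and the rest is the standard $\sum$-versus-$\prod$ dictionary.
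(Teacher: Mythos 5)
Your argument is correct and follows essentially the same route as the paper's: both reduce to Lemma~\ref{lem:Euler product for finite subsets} by noting that every finite subset of $\langle \mathcal{S}(M)\rangle$ sits inside some $\langle A\rangle$ with $A\subseteq\mathcal{S}(M)$ finite, interchange suprema to identify $\zeta_M(\sigma)$ with $\prod_{a\in\mathcal{S}(M)}\bigl(1-\tfrac{1}{\sigma(a)}\bigr)^{-1}$, and invoke the standard $e^{2c}$-type comparison between $\sum_a c_a$ and the product (your packaging of this as a single identity in $[0,\infty]$, and your explicit proof of both directions of the sum--product comparison, are only cosmetic differences from the paper, which handles the easy direction of~(1) by sub-sum monotonicity).
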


\begin{proof}
	(1) It follows directly from our definition of a convergent sum that if $J$ is a set of indices and $\{b_j : j \in J\}$ is a set consisting of nonnegative real numbers such that the sum $\sum_{j \in J} b_j$ converges, then $\sum_{i \in I} b_i$ converges for any subset $I$ of $J$. As a consequence, it is clear that $\sum_{a \in \mathcal{S}(M)} \frac{1}{\sigma(a)}$ converges provided that $\sum_{x \in \langle \mathcal{S}(M) \rangle} \frac{1}{\sigma(x)}$ converges. To establish the converse, we first need to argue the following claim.
	\medskip
	
	\noindent \textit{Claim:} For any subset $R$ of $[0,1)$, the convergence of the sum $\sum_{r \in R} r$ implies the convergence of the product $\prod_{r \in R} \frac{1}{1-r}$.
	\smallskip
	
	\noindent \textit{Proof of Claim:} Let $R$ be a subset of $[0,1)$ such that the sum $\sum_{r \in R} r$ converges. Then the set $S := \{r \in R : r \ge \frac 12\}$ must be finite. Since dropping finitely many terms from $R$ does not affect the convergence of either $\sum_{r \in R} r$ or $\prod_{r \in R} \frac{1}{1-r}$, after replacing $R$ by $R \setminus S$, we can assume that $r < \frac 12$ for all $r \in R$. Thus, for each $r \in R$, we see that $(1-r)(1+2r) = 1 + r(1-2r) \ge 1$ and, therefore, $\frac{1}{1-r} \le 1 + 2r \le e^{2r}$. As a result, for any finite subset $T$ of $R$, we obtain that
	\[
		\prod_{t \in T} \frac{1}{1-t} \le \prod_{t \in T} e^{2t} = e^{2 \sum_{t \in T} t}.
	\]
	Hence the product $\prod_{r \in R} \frac{1}{1-r}$ converges provided that the sum $\sum_{r \in R} r$ converges (observe that $\frac{1}{1-r} \ge 1$ for all $r \in R$), and our claim is established.
	\smallskip
	
	Now suppose that the sum $\sum_{a \in \mathcal{S}(M)} \frac{1}{\sigma(a)}$ converges. Since $\{ \frac{1}{\sigma(a)} : a \in \mathcal{S}(M)\}$ is a subset of $[0,1)$, it follows from the established claim that the product $\prod_{a \in \mathcal{S}(M)} \big( 1 - \frac{1}{\sigma(a)} \big)^{-1}$ also converges. As a consequence, we obtain that
	\begin{equation} \label{eq:Euler sum aux}
		\sup_{A \subseteq \mathcal{S}(M), |A| < \infty} \, \sum_{x \in \langle A \rangle} \frac{1}{\sigma(x)} = \sup_{A \subseteq \mathcal{S}(M), |A| < \infty} \, \prod_{a \in A} \frac{1}{1 - \frac{1}{\sigma(a)}} = \prod_{a \in \mathcal{S}(M)} \frac{1}{1 - \frac{1}{\sigma(a)}},
	\end{equation}
	where the first equality follows from Lemma~\ref{lem:Euler product for finite subsets}. On the other hand, observe that for any finite subset~$S$ of $\langle \mathcal{S}(M) \rangle$ there exists a finite subset $A_S$ of $\mathcal{S}(M)$ such that $S \subseteq \langle A_S \rangle$ and, therefore,
	\begin{equation} \label{eq:Euler sum aux 2}
		\sum_{x \in S} \frac{1}{\sigma(x)} \le \sum_{x \in \langle A_S \rangle} \frac{1}{\sigma(x)} \le \sup_{A \subseteq \mathcal{S}(M), |A| < \infty} \, \sum_{x \in \langle A \rangle} \frac{1}{\sigma(x)} = \prod_{a \in \mathcal{S}(M)} \frac{1}{1 - \frac{1}{\sigma(a)}},
	\end{equation}
	where the last equality is that established in~\eqref{eq:Euler sum aux}. Hence the sum $\sum_{x \in \langle \mathcal{S}(M) \rangle} \frac{1}{\sigma(x)}$ converges, as desired.
	\smallskip
	
	(2) It follows from~\eqref{eq:Euler sum aux 2} that
	\[
		\sum_{x \in \langle \mathcal{S}(M) \rangle} \frac{1}{\sigma(x)} = \sup_{S \subseteq \langle \mathcal{S}(M) \rangle, |S| < \infty} \, \sum_{x \in S} \frac{1}{\sigma(x)} \le \prod_{a \in \mathcal{S}(M)} \frac{1}{1 - \frac{1}{\sigma(a)}}. 
	\]
	On the other hand, it follows from~\eqref{eq:Euler sum aux} that
	\[
		\sum_{x \in \langle \mathcal{S}(M) \rangle} \frac{1}{\sigma(x)} \ge \sup_{A \subseteq \mathcal{S}(M), |A| < \infty} \, \sum_{x \in \langle A \rangle} \frac{1}{\sigma(x)} = \prod_{a \in \mathcal{S}(M)} \frac{1}{1 - \frac{1}{\sigma(a)}}.
	\]
	Thus, we conclude that the identity~\eqref{eq:Euler product for Krull monoids} holds.
\end{proof}

Euler's classical product formula offers a simple proof of the infinitude of primes. In the same vein, our more general Euler's product formula can be used to supply a criterion to know whether a Krull monoid with torsion class group has infinitely many strong atoms based on the existence of a divergent scale or on the behavior of the decay rate. The following corollary sheds some light upon this observation.

\begin{cor}  \label{cor:when there are infinitely many strong atoms}
	Let $M$ be a Krull monoid with torsion class group. Then the following conditions are equivalent.
	\begin{enumerate}
		\item[(a)] $\mathcal{S}(M)$ is infinite.
		\smallskip
		
		\item[(b)] There exists a sequence $(x_n)_{n \ge 1}$ of pairwise distinct elements of $\langle \mathcal{S}(M) \rangle$ and a constant $c > 1$ such that, for every $n \ge 2$, the unique factorization of $x_n$ in the UFM $\langle \mathcal{S}(M) \rangle$ has length at most $\log_c n$.
		\smallskip
		
		\item[(c)] There exists  $r \in (0,1)$ such that $\sum_{x \in \langle \mathcal{S}(M) \rangle} r^{\delta(x)}$ diverges, where $\delta(x)$ is the decay rate of $x$.
		\smallskip
		
		\item[(d)] There exists a scale $\sigma$ on $M$ such that the sum $\zeta_M(\sigma)$ is divergent.
		\smallskip
		
		\item[(e)] There exists a scale $\sigma$ on $M$ such that $\sum_{a \in \mathcal{S}(M)} \frac1{\sigma(a)}$ diverges.
	\end{enumerate}
\end{cor}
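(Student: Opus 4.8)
The plan is to prove the cyclic chain of implications
\[
	(a) \Rightarrow (b) \Rightarrow (c) \Rightarrow (d) \Rightarrow (e) \Rightarrow (a),
\]
which visits all five conditions and is more economical than relating each one to $(a)$ separately. Throughout I will use that $\langle \mathcal{S}(M) \rangle$ is a UFM with set of atoms $\mathcal{S}(M)$ (Corollary~\ref{cor:submonoids generated by strong atoms in a KM are UFMs}), that for $x \in \langle \mathcal{S}(M) \rangle$ the decay rate $\delta(x)$ equals the length of the unique factorization of $x$ in $\langle \mathcal{S}(M) \rangle$, and part~(1) of Theorem~\ref{thm:Euler product for Krull monoids}.

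I would dispatch $(a) \Rightarrow (b)$ immediately: if $\mathcal{S}(M)$ is infinite, choose a sequence $(a_n)_{n \ge 1}$ of pairwise distinct strong atoms (countably many suffice, even if $\mathcal{S}(M)$ is uncountable), set $x_n := a_n$ and $c := 2$, and note that the factorization of $x_n$ in $\langle \mathcal{S}(M) \rangle$ has length $1 \le \log_2 n$ for every $n \ge 2$. For $(b) \Rightarrow (c)$ I would take $r := 1/c \in (0,1)$; since $t \mapsto r^{t}$ is decreasing and $\delta(x_n) \le \log_c n$, we get $r^{\delta(x_n)} \ge r^{\log_c n} = c^{-\log_c n} = 1/n$ for $n \ge 2$, so, the $x_n$ being pairwise distinct, the finite partial sums $\sum_{n=2}^{N} r^{\delta(x_n)} \ge \sum_{n=2}^{N} 1/n$ are unbounded and hence $\sum_{x \in \langle \mathcal{S}(M) \rangle} r^{\delta(x)}$ diverges in the sense fixed in this section.

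For $(c) \Rightarrow (d)$ I would take $r \in (0,1)$ as in $(c)$ and apply the lemma extending any function $\mathcal{S}(M) \to \rr_{> 1}$ to a scale, using the constant function with value $1/r$; calling the resulting scale $\sigma$, the additivity of the maps $\lambda_a$ (see~\eqref{eq:lambda additive property}) and the identity $\delta(x) = \sum_{a \in \mathcal{S}(M)} \lambda_a(x)$ give $\sigma(x) = \prod_{a \in \mathcal{S}(M)} (1/r)^{\lambda_a(x)} = (1/r)^{\delta(x)}$ for every $x \in \langle \mathcal{S}(M) \rangle$, whence $\zeta_M(\sigma) = \sum_{x \in \langle \mathcal{S}(M) \rangle} r^{\delta(x)}$ diverges by $(c)$. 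Then $(d) \Rightarrow (e)$ is just the contrapositive of part~(1) of Theorem~\ref{thm:Euler product for Krull monoids} applied to the scale from $(d)$: $\zeta_M(\sigma)$ diverges if and only if $\sum_{a \in \mathcal{S}(M)} 1/\sigma(a)$ does. Finally, for $(e) \Rightarrow (a)$ I would argue contrapositively: if $\mathcal{S}(M)$ were finite, then for every scale $\sigma$ the sum $\sum_{a \in \mathcal{S}(M)} 1/\sigma(a)$ would be a finite sum of real numbers, hence convergent, so $(e)$ would fail.

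None of the five steps is deep. The step needing the most care is $(c) \Rightarrow (d)$: one must check that the uniform scale really satisfies $\sigma(x) = (1/r)^{\delta(x)}$ on $\langle \mathcal{S}(M) \rangle$, which reduces to $\lambda_a$ being a monoid homomorphism and $\delta = \sum_{a} \lambda_a$. A secondary point is that divergence in $(b) \Rightarrow (c)$ should be phrased via unbounded finite partial sums rather than sequential limits, since $\langle \mathcal{S}(M) \rangle$ need not be countable; this is harmless because one only sums over the countable set $\{x_n : n \ge 1\}$. (If preferred, one can shortcut $(a) \Rightarrow (c)$ directly by taking $r := 1/2$ and observing that $\sum_{a \in \mathcal{S}(M)} r^{\delta(a)} = \sum_{a \in \mathcal{S}(M)} \tfrac12$ already diverges when $\mathcal{S}(M)$ is infinite, but the cycle above is more symmetric.)
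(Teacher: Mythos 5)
Your proposal is correct and follows essentially the same route as the paper: the same cyclic chain $(a)\Rightarrow(b)\Rightarrow(c)\Rightarrow(d)\Rightarrow(e)\Rightarrow(a)$, with the same choices ($x_n := a_n$ and $c=2$; $r := 1/c$ and the bound $r^{\delta(x_n)} \ge 1/n$; the scale $\sigma(x) = r^{-\delta(x)}$; part~(1) of Theorem~\ref{thm:Euler product for Krull monoids}; and the trivial finiteness observation for $(e)\Rightarrow(a)$). The only cosmetic difference is that for $(c)\Rightarrow(d)$ you obtain the scale via the extension lemma applied to the constant function $1/r$, whereas the paper defines $\sigma(x):=r^{-\delta(x)}$ directly and checks it is a homomorphism using~\eqref{eq:decay rate property} --- these are the same scale.
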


\begin{proof}
	(a) $\Rightarrow$ (b): Assume that $\mathcal{S}(M)$ is an infinite set, and take a sequence $(a_n)_{n \ge 1}$ of pairwise distinct strong atoms of~$M$. It is clear that, for each $n \in \nn$, the length of the unique factorization of~$a_n$ in the UFM $\langle \mathcal{S}(M) \rangle$ is $1$. Then we can take $c=2$. 
	\smallskip
	
	(b) $\Rightarrow$ (c): Suppose there exist a sequence $(x_n)_{n \ge 1}$ and a constant $c$ as described in the statement of condition~(b). Set $r := \frac1c$ and, for each $n \in \nn$, let $\ell_n$ denote the length of the unique factorization of $x_n$ in $\langle \mathcal{S}(M) \rangle$. For each nonzero $x \in \langle \mathcal{S}(M) \rangle$, it follows from the definition of the decay rate that
	\[
		\delta(x) = \sum_{a \in \mathcal{S}(M)} \frac{x(a)}{m(x)} = \sum_{a \in \mathcal{S}(M)} x(a), 
	\]
	which is the length of the unique factorization of $x$ in $\langle \mathcal{S}(M) \rangle$. Thus, for each $n \in \nn_{\ge 2}$, it follows that $c^{\delta(x_n)} = c^{\ell_n} \le c^{\log_c n} = n$, and so $r^{\delta(x_n)} = c^{-\delta(x_n)} \ge \frac1n$. This implies that
	\[
		\sum_{x \in \langle \mathcal{S}(M) \rangle} r^{\delta(x)} \ge \sum_{n \ge 2} r^{\delta(x_n)} \ge \sum_{n \ge 2} \frac1n,
	\]
	whence $\sum_{x \in \langle \mathcal{S}(M) \rangle} r^{\delta(x)}$ diverges.
	\smallskip
	
	(c) $\Rightarrow$ (d): Let $r$ be as described in condition~(c). Now for each $x \in M$, set $\sigma(x) := r^{-\delta(x)}$. For any $y,z \in M$, the identity $\delta(yz) =  \delta(y) + \delta(z)$ (see \eqref{eq:decay rate property}) implies that $r^{-\delta(yz)} = r^{-\delta(y)} r^{-\delta(z)}$, and so~$\sigma$ is a monoid homomorphism. Now the fact that $r \in (0,1)$ implies that $\sigma(a) > 1$ for all $a \in \mathcal{S}(M)$, whence~$\sigma$ is a scale on~$M$. In addition,
	\[
		\zeta_M(\sigma) = \sum_{x \in \langle \mathcal{S}(M) \rangle} \frac1{\sigma(x)} = \sum_{x \in \langle \mathcal{S}(M) \rangle} r^{\delta(x)},
	\]
	and so $\zeta_M(\sigma)$ diverges.
	\smallskip
	
	(d) $\Rightarrow$ (e): This follows from part~(1) of Theorem~\ref{thm:Euler product for Krull monoids}.
	\smallskip
	
	(e) $\Rightarrow$ (a): This follows immediately. 
\end{proof}

\begin{rem}
	It should be noticed that many of the proofs of the infinitude of primes in the classical setting, including the notorious proof by Euclid, use both the addition and the multiplication of $\nn$. However, the more general criterion given by Corollary~\ref{cor:when there are infinitely many strong atoms} is only based on the sole operation of a Krull monoid (only the multiplication in the special case of $\nn$).
\end{rem}

With notation as in Corollary~\ref{cor:when there are infinitely many strong atoms}, the scale defined by the assignments $\sigma \colon n \mapsto n$ can be chosen (as Euler did in his purely multiplicative proof) to argue that $\nn$ contains infinitely many primes. A monoid $M_0$ isomorphic to $\nn$ also possesses infinitely many primes. From the same scale on $\nn$, we can obtain a scale on~$M_0$ yielding the infinitude of primes of $M$ via Corollary~\ref{cor:when there are infinitely many strong atoms} as follows. Let $\tau \colon M \to M'$ be a monoid isomorphism, and let $\sigma$ be a scale on $M$. Then the map $\sigma' :=  \sigma \circ \tau^{-1}$ is a scale on $M'$, and since $\sum_{y \in \mathcal{S}(M')} \frac1{\sigma'(y)} = \sum_{x \in \mathcal{S}(M)} \frac1{\sigma(x)}$, the scale $\sigma'$ satisfies condition~(e) in Corollary~\ref{cor:when there are infinitely many strong atoms} if and only if $\sigma$ does. For instance, it is clear that $M_0 := \{n^2 : n \in \nn\}$ is a multiplicative monoid isomorphic to $\nn$, under the isomorphism $\tau \colon \nn \to M_0$ given by the assignments $\tau \colon n \mapsto n^2$. For the scale $\sigma \colon n \mapsto  n$ on $\nn$, we obtain the scale $\sigma' := \sigma \circ \tau^{-1}$ on $M_0$, which is given by the assignments $\sigma' \colon n \mapsto \sqrt{n}$. This shows again the flexibility of putting the Riemann zeta function in terms of a variable scale $\sigma$.
\smallskip

In the next corollary, we connect the unique factorization property with Euler's product formula in the context of Krull monoids endowed with convergent scales.

\begin{cor} \label{cor:Riemann zeta function and Euler's product for UFMs}
	Let $M$ be a Krull monoid with torsion class group, and let $\sigma$ be a scale on~$M$ such that $\sum_{a \in \mathcal{S}(M)} \frac1{\sigma(a)}$ converges. Then the following statements hold.
	\begin{enumerate}
		\item The monoid $M$ is a UFM if and only if Euler's classical product formula holds; that is,
		\begin{equation} \label{eq:classic Euler's formula}
			\sum_{x \in M} \frac1{\sigma(x)} = \prod_{p \in \mathcal{P}(M)} \frac{1}{1 - \frac{1}{\sigma(p)}}.
		\end{equation}
		\smallskip
		
		\item If $M$ is a UFM, then $M$ has infinitely many primes if and only if the anti-geometric series $\sum_{x \in M} r^{\delta(x)}$ diverges for some $r \in (0,1)$.
	\end{enumerate}
\end{cor}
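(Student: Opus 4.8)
The plan is to reduce both parts to results already in hand. Recall first that a Krull monoid $M$ (with torsion class group, as always) is a UFM exactly when $\mathcal{S}(M) = \mathcal{P}(M)$, in which case $M$ is atomic and generated by its atoms, so that $\langle \mathcal{S}(M) \rangle = \langle \mathcal{P}(M) \rangle = M$; conversely, $\langle \mathcal{P}(M) \rangle$ is always a UFM with set of primes $\mathcal{P}(M)$ by Corollary~\ref{cor:submonoids generated by strong atoms in a KM are UFMs}, since $\mathcal{P}(M) \subseteq \mathcal{S}(M)$. Also note that $\sum_{p \in \mathcal{P}(M)} \frac{1}{\sigma(p)} \le \sum_{a \in \mathcal{S}(M)} \frac{1}{\sigma(a)} < \infty$, so by the Claim inside the proof of Theorem~\ref{thm:Euler product for Krull monoids} the product $\prod_{p \in \mathcal{P}(M)} \frac{1}{1 - \frac{1}{\sigma(p)}}$ converges; thus the right-hand side of~\eqref{eq:classic Euler's formula} is always finite.

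For the forward implication of~(1), assume $M$ is a UFM. Then $\langle \mathcal{S}(M) \rangle = M$ and $\mathcal{S}(M) = \mathcal{P}(M)$, and since $\sum_{a \in \mathcal{S}(M)} \frac{1}{\sigma(a)}$ converges, Theorem~\ref{thm:Euler product for Krull monoids} yields precisely~\eqref{eq:classic Euler's formula}. For the converse, assume~\eqref{eq:classic Euler's formula} holds. Running the argument of the proof of Theorem~\ref{thm:Euler product for Krull monoids} verbatim with $\mathcal{S}(M)$ replaced by its subset $\mathcal{P}(M)$ — the only ingredient, Lemma~\ref{lem:Euler product for finite subsets}, applies to any finite subset of $\mathcal{S}(M)$, hence to any finite subset of $\mathcal{P}(M)$ — gives
\[
	\sum_{x \in \langle \mathcal{P}(M) \rangle} \frac{1}{\sigma(x)} = \prod_{p \in \mathcal{P}(M)} \frac{1}{1 - \frac{1}{\sigma(p)}}.
\]
Comparing this with~\eqref{eq:classic Euler's formula} forces $\sum_{x \in M} \frac{1}{\sigma(x)} = \sum_{x \in \langle \mathcal{P}(M) \rangle} \frac{1}{\sigma(x)}$. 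Since $M$ is reduced, every term $\frac{1}{\sigma(x)}$ is strictly positive and $M \setminus \langle \mathcal{P}(M) \rangle$ consists of nonunits; hence, if some $y \in M \setminus \langle \mathcal{P}(M) \rangle$ existed, then adjoining $y$ to finite subsets of $\langle \mathcal{P}(M) \rangle$ would give $\sum_{x \in M} \frac{1}{\sigma(x)} \ge \sum_{x \in \langle \mathcal{P}(M) \rangle} \frac{1}{\sigma(x)} + \frac{1}{\sigma(y)}$, a contradiction. Thus $M = \langle \mathcal{P}(M) \rangle$ is a UFM.

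Part~(2) follows directly from Corollary~\ref{cor:when there are infinitely many strong atoms}: when $M$ is a UFM we have $\mathcal{S}(M) = \mathcal{P}(M)$ and $\langle \mathcal{S}(M) \rangle = M$, so condition~(a) of that corollary reads ``$M$ has infinitely many primes'' and the divergence in condition~(c) is exactly the divergence of $\sum_{x \in M} r^{\delta(x)}$; the equivalence (a)~$\Leftrightarrow$~(c) is then the assertion (and requires no hypothesis on $\sigma$).

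The main obstacle is the converse of~(1). One must verify that the paper's ad hoc ``supremum over finite subsets'' notion of convergence transfers cleanly from $\mathcal{S}(M)$ to the sub-index-set $\mathcal{P}(M)$, and — more delicately — that the equality of the two nested sums genuinely forces $\langle \mathcal{P}(M) \rangle = M$ rather than merely $\sum_{x \in M \setminus \langle \mathcal{P}(M) \rangle} \frac{1}{\sigma(x)} = 0$; this is precisely where the reducedness of $M$ (equivalently, positivity of all the terms) enters. Everything else is bookkeeping with the convergence conventions.
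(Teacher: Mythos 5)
Your proof is correct. The forward implication of (1) and all of part (2) coincide with the paper's argument. The converse of (1) is where you diverge: the paper keeps $\mathcal{S}(M)$ in play, sandwiching $\prod_{p \in \mathcal{P}(M)}\big(1-\tfrac{1}{\sigma(p)}\big)^{-1}$ between the assumed identity~\eqref{eq:classic Euler's formula} and Theorem~\ref{thm:Euler product for Krull monoids} applied to $\mathcal{S}(M)$ (using $\langle \mathcal{S}(M)\rangle \subseteq M$ to force equality throughout, as in~\eqref{eq:auxi expression}); it then reads off $\mathcal{P}(M)=\mathcal{S}(M)$ from the strict inequality of the individual factors, and separately $M=\langle\mathcal{S}(M)\rangle$ from the equality of the sums. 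You instead relativize Theorem~\ref{thm:Euler product for Krull monoids} to the sub-index-set $\mathcal{P}(M)$ and conclude $M=\langle\mathcal{P}(M)\rangle$ in one step from positivity of the terms. Your relativization claim is legitimate: the proof of the theorem uses only Lemma~\ref{lem:Euler product for finite subsets} for finite subsets of $\mathcal{S}(M)$ and the fact that finite subsets of $\langle A\rangle$ sit inside $\langle A_S\rangle$ for finite $A_S \subseteq A$, both of which hold verbatim for $A \subseteq \mathcal{P}(M)$ (alternatively, one could apply Corollary~\ref{cor:submonoids generated by strong atoms in a KM are UFMs} and Theorem~\ref{thm:Euler product for Krull monoids} directly to the Krull monoid $\langle\mathcal{P}(M)\rangle$, avoiding any re-running of the proof). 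What your route buys is a cleaner single comparison of two nested sums; what it costs is the intermediate conclusion $\mathcal{P}(M)=\mathcal{S}(M)$, which the paper obtains explicitly but which you recover only a posteriori from $M$ being a UFM. Both endings rest on the same standard fact that a monoid generated by primes is a UFM.
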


\begin{proof}
	(1) If $M$ is a UFM, then $\mathcal{S}(M) = \mathcal{P}(M)$ and $M = \langle \mathcal{S}(M) \rangle$, whence Euler's product formula~\eqref{eq:classic Euler's formula} follows from Theorem~\ref{thm:Euler product for Krull monoids}. Conversely, suppose that Euler's product formula~\eqref{eq:classic Euler's formula} does hold. Since $M$ is a Krull monoid with torsion class group, it follows from Theorem~\ref{thm:Euler product for Krull monoids} that
	\[
		\sum_{x \in \langle \mathcal{S}(M) \rangle} \frac1{\sigma(x)} = \prod_{a \in \mathcal{S}(M)} \frac1{1 - \frac1{\sigma(a)}}.
	\]
	As $\mathcal{P}(M) \subseteq \mathcal{S}(M)$,
	\begin{equation} \label{eq:auxi expression}
		\sum_{x \in M} \frac1{\sigma(x)} = \prod_{p \in \mathcal{P}(M)} \frac1{1 - \frac1{\sigma(p)}} \le \prod_{a \in \mathcal{S}(M)} \frac1{1 - \frac1{\sigma(a)}} = \sum_{x \in \langle \mathcal{S}(M) \rangle} \frac1{\sigma(x)}.
	\end{equation}
	Now the inclusion $\langle \mathcal{S}(M) \rangle \subseteq M$ guarantees that the inequality in~\eqref{eq:auxi expression} is indeed an equality. Therefore, from the fact that $\big( 1 - \frac1{\sigma(a)}\big)^{-1} > 1$ for all $a \in \mathcal{S}(M)$, we obtain that $\mathcal{P}(M) = \mathcal{S}(M)$. Since $\sigma(x) > 1$ for any nonunit element $x \in M$, the equality $\sum_{x \in M} \frac1{\sigma(x)}  = \sum_{x \in \langle \mathcal{S}(M) \rangle} \frac1{\sigma(x)}$ implies that $M = \langle \mathcal{S}(M) \rangle$. Thus, $M = \langle \mathcal{P}(M) \rangle$, which implies that $M$ is a UFM (cf. Example~\ref{ex:strong atom of a rank-3 monoid}).
	\smallskip
	
	(2) If $M$ is a UFM, then both equalities $\mathcal{P}(M) = \mathcal{S}(M)$ and $M = \langle \mathcal{S}(M) \rangle$ hold, and so the desired condition follows from the equivalence (a) $\Leftrightarrow$ (c) in Corollary~\ref{cor:when there are infinitely many strong atoms}.
\end{proof}

\begin{rem}
	Corollary~\ref{cor:Riemann zeta function and Euler's product for UFMs} applies, in particular, to the multiplicative monoid $\nn$, illustrating that the Fundamental Theorem of Arithmetic is equivalent to Euler's product formula for $\nn$. On the other hand, the Hilbert monoid $H = 4 \nn_0 + 1$ is not a UFM (see Example~\ref{ex:Hilbert monoid}), and so Corollary~\ref{cor:Riemann zeta function and Euler's product for UFMs} applied to $H$ with the scale $\sigma(n) = n^s$ for some fixed $s \in \rr_{>1}$ shows that Euler's product formula cannot hold for $H$. See also \cite[page 169]{hC80}.
\end{rem}

It is interesting to look at the multiplicative monoid $\nn$ from the perspective of Theorem~\ref{thm:Euler product for Krull monoids} and Corollaries~\ref{cor:when there are infinitely many strong atoms} and~\ref{cor:Riemann zeta function and Euler's product for UFMs}.

\begin{ex} \label{ex:classical setting}
	The multiplicative monoid $\nn$ is a UFM and, therefore, a Krull monoid with trivial class group.
	\begin{enumerate}
		\item For any $s \in \rr_{> 1}$, the map $\sigma_s \colon \nn \to \rr^\times$ defined by $\sigma_s(n) = n^s$ is a scale on $\nn$ and the Riemann zeta function $\zeta_\nn \colon \sigma_s \mapsto \zeta_\nn(\sigma_s)$ yields the classical (real) Riemann zeta function. In addition, with this choice of scales, the identity~\eqref{eq:Euler product for Krull monoids} gives us back Euler's classical product formula~\eqref{eq:Euler's product formula}. In particular, the choice of the scale given by the assignments $\sigma \colon n \mapsto n$ in Corollary~\ref{cor:when there are infinitely many strong atoms} gives us the existence of infinitely many primes, which is basically Euler's classical proof.
		\smallskip
		
		\item Another possible scale on $\nn$ is the map $\sigma_c \colon \nn \to \rr^\times$ defined by $\sigma_c(n) = c^{\delta(n)}$ for some fixed constant $c \in (1,2)$. Since $\nn$ has infinite set of strong atoms, namely $\pp$, according to Corollary~\ref{cor:when there are infinitely many strong atoms} there exists $r \in (0,1)$ such that $\sum_{n \in \nn} r^{\delta(n)}$ diverges. Let us verify that $r := \frac1c$ works. After fixing $n \in \nn$ and writing $n = \prod_{p \in \pp} p^{n(p)}$, it follows that
		\[
			\log_c n = \sum_{p \in \pp} n(p) \log_c p \ge \sum_{p \in \pp} n(p) = \delta(n)
		\]
		and, therefore, $n \ge c^{\delta(n)}$. This implies that $\sum_{n \in \nn} r^{\delta(n)}$ diverges, as desired. In contrast to the divergence of the harmonic series (as used in Euler's proof), the divergence of the series $\sum_{n \in \nn} r^{\delta(n)}$ explains the infinitude of primes in the sense that the frequency of prime factors is shrinking with $\log_c n$.
		\smallskip
		
		\item Since each scale on $\nn$ is determined by its values on $\pp$, in order to define a scale on $\nn$, we can start by fixing any function $f \colon \pp \to \rr^\times$, and then consider the function $\sigma_f \colon \nn \to \rr^\times$ defined as follows: for every $n = \prod_{p \in \pp} p^{n(p)}$, set
		\[
			\sigma_f(n) := \prod_{p \in \pp} p^{f(p) n(p)}.
		\]
		One can easily verify that $\sigma_f$ is a scale on $\nn$. Let us call this the \emph{Riemann zeta function weighted by} $f$. Assuming that the series $\sum_{p \in \pp} \frac{1}{f(p)}$ converges, we obtain from part~(1) of Theorem~\ref{thm:Euler product for Krull monoids} that
		\[
			\sum_{n \in \nn} \frac{1}{\sigma_f(n)} = \prod_{p \in \pp} \frac{1}{1 - \frac{1}{p^{f(p)}}}.
		\]
		\smallskip
		
		\item Now let $p_k$ denote the $k$-th prime, and consider the scale $\sigma$ on $\nn$ determined by the assignments $p_k \mapsto 4k^2$ for every $k \in \nn$. Since $\sum_{n=1}^\infty \frac{1}{\sigma(p_n)} = \sum_{n=1}^\infty \frac{1}{4n^2}$ converges, it follows from Theorem~\ref{thm:Euler product for Krull monoids} that the series $\sum_{n \in \nn} \frac{1}{\sigma(n)}$ also converges, and it also follows that
		\[
			 \sum_{n \in \nn} \frac{1}{\sigma(n)} = \prod_{n=1}^\infty \frac{1}{1 - \frac{1}{\sigma(p_n)}} = \prod_{n=1}^\infty \frac{4n^2}{4n^2 - 1} = \frac{\pi}{2}.
		\]
		The last equality is well known: it was established by J. Wallis back in 1656.
	\end{enumerate}
\end{ex}
\smallskip

Free commutative monoids are one of the simplest Krull monoids with torsion class group, and so Theorem~\ref{thm:Euler product for Krull monoids} applies to them.

\begin{ex}
	Let $I$ be a nonempty set, and let $M$ denote the free commutative monoid $\nn_0^{(I)}$ on $I$. As mentioned in Example~\ref{ex:free commutative monoid}, the monoid $M$ is a UFM with $\mathcal{P}(M) = \{e_i : i \in I\}$, the basis of $M$. Thus, for each $f \in M$ we can write $f = \sum_{i \in I} k_i e_i$ uniquely with $k_i = 0$ for all but finitely many $i \in I$. For each $i \in I$, pick $r_i \in \rr_{> 1}$ such that $\sum_{i \in I} \frac 1{r_i}$ converges, and consider the function $\sigma \colon M \to \rr^\times$ defined by $\sigma(f) := \prod_{i \in I} r_i^{k_i}$. We can readily check that $\sigma$ is a scale on $M$. Since $\sum_{i \in I} \frac1{\sigma(e_i)} = \sum_{i \in I} \frac1{r_i}$ converges, it follows from Theorem~\ref{thm:Euler product for Krull monoids} that $\zeta_M(\sigma) = \sum_{f \in M} \frac{1}{\sigma(f)}$ also converges and
	\[
		\sum_{f \in M} \frac{1}{\sigma(f)} =  \prod_{i \in I} \frac{1}{1 - \frac{1}{r_i}}.
	\]
\end{ex}

As the following example illustrates, Theorem~\ref{thm:Euler product for Krull monoids} applies specially well to Krull monoids with class group $\zz/2\zz$.

\begin{ex}
	Let $M$ be a Krull monoid with $\text{Cl}(M) \cong \zz/2\zz$, and let $\tau \colon M \to F$ be a divisor theory of $M$. We have seen in Example~\ref{ex:HF KM} that $M$ is an HFM with $\mathcal{S}(M) = \mathcal{S}_1 \sqcup \mathcal{S}_2$, where 
	\[
		\mathcal{S}_1 = \mathcal{P}(F) \cap M \quad \text{ and } \quad \mathcal{S}_2 =  \{p^2 : p \in \mathcal{P}(F) \setminus M \text{ and } p^2 \in M\}.
	\]
	Then for each scale $\sigma$ on $M$, the sum $\zeta_M(\sigma) = \sum_{x \in M} \frac{1}{\sigma(x)}$ converges if and only if both sums $\sum_{a \in \mathcal{S}_1} \frac{1}{\sigma(a)}$ \, and \, $\sum_{a \in \mathcal{S}_2} \frac{1}{\sigma(a)}$ converge, in which case, the following generalized version of Euler's product formula holds:
	\[
		\zeta_M(\sigma) = \sum_{x \in \langle \mathcal{S}(M) \rangle} \frac{1}{\sigma(x)} = \prod_{a \in \mathcal{S}_1 \sqcup \mathcal{S}_2} \frac1{1 - \frac1{\sigma(a)}}.
	\]
\end{ex}
\medskip

The nonzero ideals of the ring of integers of an algebraic number field form a multiplicative monoid, which is known to be a Krull monoid with torsion class group. In this setting, the generalized Riemann zeta function (as introduced in this paper) will specialize to the well-known Dedekind zeta function, yielding Euler's classical product representation for algebraic number fields. We conclude this paper taking a look at this situation.

\begin{ex} \label{ex:Dedekind zeta function}
	Let $\mathcal{O}_K$ be the ring of integers of an algebraic number field $K$. Also, let $M$ be the multiplicative monoid consisting of all nonzero principal ideals of $\mathcal{O}_K$, and let $S$ be the multiplicative monoid of nonzero ideals of $\mathcal{O}_K$. It is well known that the inclusion $M \hookrightarrow S$ is a divisor theory for $M$ with the prime ideals as prime divisors and also that the class group $\text{Cl}(M) = S/M$ is finite. Therefore $M$ is a Krull monoid with torsion class group. Let $\sigma$ be a scale on $M$ such that $\sum_{a \in \mathcal{S}(M)} \frac1{\sigma(a)}$ converges. It follows from Theorem~\ref{thm:Euler product for Krull monoids} that
	\begin{equation} \label{eq:Euler formula for ring of integers}
		\sum_{x \in \langle \mathcal{S}(M) \rangle} \frac{1}{\sigma(x)} =  \prod_{a \in \mathcal{S}(M)} \frac{1}{1 - \frac{1}{\sigma(a)}}.
	\end{equation}
	We proceed to argue that when the scale $\sigma$ is given by the norm of ideals, then Euler's product formula~\eqref{eq:Euler formula for ring of integers} yields Euler's classical product representation of the Dedekind zeta function of the algebraic number field $K$ (see \cite[Section~42]{eH70}). For this, let $N \colon S \to \nn_0$ be the (absolute) norm function; that is, $N(I) := |\mathcal{O}_K/I|$ for each $I \in S$. According to Corollary~\ref{cor:consequences of the DT}, $a \in \mathcal{S}(M)$ if and only if $a = P_a^{k(P_a)}$ for some prime ideal $P_a \in S$, where $k(P_a)$ is the order of the class of $P_a$ in the class group $S/M$. From this, we obtain that the map $\varphi \colon \mathcal{S}(M) \to \mathcal{P}(S)$ defined via the assignments $\varphi(a) = P_a$ is a bijection that extends uniquely to a monoid isomorphism $\langle \mathcal{S}(M) \rangle \to S$, which we also denote by $\varphi$ (more information about the connection between strong atoms and prime ideals in the setting of integral domains can be found in \cite[Section~2]{CK12}). For each $s \in \rr_{> 1}$, we can use the Decay Theorem to define the function
	\[
		\sigma_s \colon M \to \rr^\times \quad \text{ by } \quad \sigma_s(x) = N\big(\varphi\big(x^{m(x)}\big)\big)^{\frac{s}{m(x)}}
	\]
	for each $x \in M$ (observe that $\sigma_s(x) = N(\varphi(x))^s$ when $x \in \langle \mathcal{S}(M) \rangle$). Using the fact that $N$ is a multiplicative function and $\varphi$ is a monoid homomorphism, one can readily check that $\sigma_s$ is a scale on~$M$. Now suppose that the sum $\sum_{P \in \mathcal{P}(S)} \frac1{N(P)^s}$ converges. In this case, the sum $\sum_{x \in \langle \mathcal{S}(M) \rangle} \frac1{\sigma_s(x)}$ also converges and
	\[
		\zeta_M(\sigma_s) = \sum_{x \in \langle \mathcal{S}(M) \rangle} \frac1{\sigma_s(x)} =  \sum_{x \in \langle \mathcal{S}(M) \rangle} \frac1{N(\varphi(x))^s} = \sum_{I \in S} \frac1{N(I)^s}.
	\]
	Thus, for each $s \in \rr_{> 1}$, the Riemann zeta function $\zeta_M$ of $M$ evaluated at the scale $\sigma_s$ coincides with the Dedekind zeta function $\zeta_K$ evaluated at $s$. As a consequence, the general version of Euler's product formula~\eqref{eq:Euler product for Krull monoids} for Krull monoids specializes to Euler's classical product representation for the Dedekind zeta function
	\[
		\zeta_K(s) = \prod_{P \in \mathcal{P}(S)} \frac{1}{1 - \frac1{N(P)^s}}.
	\]
\end{ex}

\begin{rem}
	Theorem~\ref{thm:Euler product for Krull monoids} holds for any scale on $M$ and it is ``intrinsic" in the sense that it is only based on the operation of $M$ (the notion of a strong atom of $M$ is solely based on the operation of~$M$). In this direction, we conclude by observing that the Dedekind zeta function and its Euler's product representation are based on the notion of an ideal, which invokes not only the multiplication of $\mathcal{O}_K$ but also its addition.
\end{rem}

\bigskip
\section*{Acknowledgments}

The authors are grateful to an anonymous referee for useful comments and suggestions that helped improve an earlier version of this paper. While working on this paper, the first author was kindly supported by the NSF award DMS-2213323.
\bigskip

\bigskip

\end{document}